\documentclass[12pt, a4paper]{amsart}

\usepackage[utf8]{inputenc}
\usepackage{amsmath}
\usepackage{amssymb}
\usepackage{amsthm}
\usepackage{amsaddr}
\usepackage{enumitem}
\usepackage{tikz-cd}
\usepackage{geometry}
\usepackage{titlesec}
\usepackage{mwe}

\titleformat{\section}{\centering\scshape\mdseries}{\thesection.}{0.5em}{}
\titleformat{\subsection}[runin]{\normalfont\bfseries}{\thesubsection.}{0.5em}{}[. ]
\titlespacing{\subsection}{0pt}{1.5ex plus .2ex}{0.5ex plus .1ex}

\theoremstyle{definition}
\newtheorem{definition}{Definition}[section]
\newtheorem{example}{Example}[section]
\newtheorem{remark}{Remark}[section]
\newtheorem*{ml_theorem-proof}{Proof of theorem \ref{ml_theorem}}
\newtheorem*{suspension-flexible-proof}{Proof of $G$-orbit decomposition theorem}

\theoremstyle{plain}
\newtheorem{lemma}{Lemma}[section]
\newtheorem{theorem}{Theorem}[section]
\newtheorem{corollary}{Corollary}[section]
\newtheorem{proposition}{Proposition}[section]
\newtheorem*{suspension-flexible}{$G$-orbit decomposition theorem}

\newcommand{\Affine}{\mathbb{A}}
\newcommand{\VV}{\mathbb{V}}
\newcommand{\NN}{\mathbb{N}}
\newcommand{\KK}{\mathbb{K}}
\newcommand{\KKX}{\mathbb{K}[X]}
\newcommand{\FKX}{\mathbb{K}(X)}

\newcommand{\Aut}{\operatorname{Aut}}
\newcommand{\SAut}{\operatorname{SAut}}
\newcommand{\Ker}{\operatorname{Ker}}
\newcommand{\Der}{\operatorname{Der}}
\newcommand{\LND}{\operatorname{LND}}
\newcommand{\Frac}{\operatorname{Frac}}
\newcommand{\Spec}{\operatorname{Spec}}
\newcommand{\ML}{\operatorname{ML}}
\newcommand{\FML}{\operatorname{FML}}
\newcommand{\Susp}{\operatorname{Susp}}
\newcommand{\codim}{\operatorname{codim}}
\newcommand{\rk}{\operatorname{rk}}
\newcommand{\reg}{\operatorname{reg}}

\newcommand{\Ga}{\mathbb{G}_a}
\newcommand{\Df}{\mathcal{D}_f}
\newcommand{\If}{\mathcal{I}_f}
\newcommand{\Rf}{\mathcal{R}_f}
\newcommand{\Gf}[2]{\mathcal{#1}^{(#2)}_{f}}

\newcommand{\RegY}{\operatorname{Reg}_X Y}
\newcommand{\RegVf}{\operatorname{Reg}_X \mathbb{V}(f)}

\title{\scshape\mdseries the flexibility of $m$-suspensions constructed via a local slice}
\author{\scshape\mdseries isaev roman}
\address{Lomonosov Moscow State University, Faculty of Mechanics and Mathematics, Department of Higher Algebra, Leninskie Gory 1, Moscow, 119991, Russia}
\date{}
\subjclass[2020]{Primary 14J50, 14R20; \ Secondary 14R05, 13A50}
\keywords{Affine variety, locally nilpotent derivation, action of additive group, flexibility, suspension, local slice.}
\thanks{The work was supported by the Theoretical Physics and Mathematics Advancement Foundation "BASIS", project number 24-7-2-20-1.}

\begin{document}
\maketitle

\begin{quote}
\small
{\scshape\mdseries Abstract.} We refer to the variety $\Susp(X, f, k_1, \dots, k_m) = \VV(y_1^{k_1} \dots y_m^{k_m} - f(x)) \subset X \times \Affine^m$ as an \textit{$m$-suspension over affine variety $X$, constructed via a local slice $f(x) \in \KKX$}, if there exists a locally nilpotent derivation $\delta$ on X such that $\delta (f) \neq 0, \delta^2 (f) = 0$. In this paper, we determine the sufficient conditions under which such a variety is generically flexible and those under which it is flexible. Furthermore, for a flexible $X$ we propose a construction of a local slice $f$ that guarantees the flexibility of the suspension $\Susp(X, f, 1, k_2, \dots, k_m)$.
\end{quote}

\vspace{\baselineskip}

\section{Introduction}

Let $X$ be an irreducible affine algebraic variety over an algebraically closed field $\KK$ of characteristic zero. By $\Aut(X)$ we denote the group of regular automorphisms of the variety $X$. \emph{The special automorphism group} $\SAut(X)$ is the subgroup of $\Aut(X)$ generated, as an abstract group, by all one-parameter unipotent subgroups of $\Aut(X)$.

The action of $\SAut(X)$ on $X$ is closely related to the notion of flexibility. A smooth point of $X$ is called \emph{flexible} if its tangent space is spanned by the tangent vectors to the orbits of one-parameter unipotent subgroups passing through that point. Accordingly, the variety $X$ is called \emph{flexible} if every smooth point of $X$ is flexible. The variety $X$ is said to be \emph{generically flexible} if the set of its flexible points forms an open $\SAut(X)$-orbit. Clearly, every flexible variety is generically flexible, whereas the converse does not hold in general.

The flexibility of a variety is equivalent to the transitivity of the action of $\SAut(X)$ on its smooth locus. Establishing such transitivity, however, may be technically challenging. In contrast, generical flexibility is often easier to prove and, moreover, admits a reformulation in terms of invariants. Recall that the subalgebra of $\SAut(X)$-invariant elements of $\KKX$ is called the \emph{Makar--Limanov invariant} of $X$ and is denoted by $\ML(X)$. Considering rational $\SAut(X)$-invariants analogously leads to the definition of the \emph{field Makar--Limanov invariant}, denoted by $\FML(X)$. In these terms, the criterion for the generical flexibility of $X$ is simply $\FML(X) = \KK$.

Besides providing a criterion for establishing (or disproving) generical flexibility, the invariants $\ML(X)$ and $\FML(X)$ are valuable tools for distinguishing algebraic varieties up to isomorphism. For example, in the pioneering work of Leonid Makar-Limanov \cite{Makar-Limanov}, where the Makar--Limanov invariant was introduced, $\ML(X)$ was used to prove that the Koras–Russell cubic threefold is not isomorphic to $\Affine^3$.

Recall the definition of an \emph{$m$-suspension} over an affine algebraic variety $X$.

\begin{definition}
Fix a positive integer $m$. Let $f \in \KK[X] \setminus \KK$, and let $k_1, \dots, k_m$ be positive integers. Define the affine algebraic variety
\[
Y = \Susp(X, f, k_1, \dots, k_m) = \VV(y_1^{k_1} y_2^{k_2} \dots y_m^{k_m} - f(x)) \subset X \times \Affine^m,
\]
called the \emph{$m$-suspension} over $X$ with weights $k_1, \dots, k_m$.
\end{definition}

The question of the flexibility of the above construction was partially solved in \cite{Arzhantsev-suspensions}. Three classes of affine algebraic varieties were investigated there, and the transitivity of the action of the special automorphism group was established for each of them. One of the constructions considered was the ordinary \emph{suspension} $\Susp(X, f)$. In our terminology, it can be viewed as the $2$-suspension $\Susp(X, f, 1, 1)$. A natural question is whether an arbitrary $m$-suspension is flexible and under what assumptions. Certain properties of $m$-suspensions and their Makar--Limanov invariants were studied in \cite{Gaifullin-suspensions}.

The aim of this paper is to study the construction of the $m$-suspension $Y = \Susp(X, f, k_1, \dots, k_m)$ with arbitrary weights $k_1, \dots, k_m$. More specifically, we investigate the relationship between the invariants $\ML(Y)$ and $\ML(X)$ and formulate sufficient conditions for the flexibility and generical flexibility of $Y$. Many natural examples of algebraic varieties can be realized as $m$-suspensions over other varieties, and even over other $m$-suspensions. Sufficient conditions for the flexibility of $Y$ in terms of $X$ eliminate the need to analyze the action of the special automorphism group on $Y$ directly, allowing one instead to focus on the structurally simpler variety $X$.

Section \ref{sec_preliminaries} introduces all definitions required throughout the paper. Most of them are taken from \cite{Arzhantsev-flexible}, where the notions of flexibility, transitivity, and infinite transitivity of the action of $\SAut(X)$ are studied in detail. In Section \ref{sec_preliminaries_suspensions}, we investigate a class of locally nilpotent derivations on an $m$-suspension over $X$ obtained by extending locally nilpotent derivations from $X$. Theorem \ref{ml_theorem} establishes an upper bound for the Makar--Limanov invariant of an $m$-suspension. It also motivates the study of a new class of objects, namely \emph{$m$-suspensions constructed via a local slice}. Finally, Section \ref{sec_preliminaries_suspensions_ls} concludes our investigation by generalizing the proof from \cite{Arzhantsev-suspensions} to the case of $\Susp(X, f, 1, k_2, \dots, k_m)$ with arbitrary weights $k_2, \dots, k_m$ under an additional assumption on $f \in \KK[X]$.

\section{Preliminaries}\label{sec_preliminaries}

Throughout this paper, all algebraic varieties are assumed to be irreducible, affine, and defined over an algebraically closed field $\KK$ of characteristic zero. We denote by $\Ga$ the additive group of the field $\KK$. We assume that the set of natural numbers $\NN$ contains zero.

\subsection{Basic definitions and notation}\label{ssec_preliminaries_basic}

A subgroup $H$ of the automorphism group $\Aut(X)$ of an algebraic variety $X$ is called an \emph{algebraic} if it admits the structure of an algebraic group such that the natural map $H \times X \rightarrow X$ is a morphism of algebraic varieties. Algebraic subgroups of $\Aut(X)$ isomorphic to $\Ga$ are called \emph{$\Ga$-subgroups}. They are also commonly referred to as \emph{one-parameter unipotent subgroups} of $\Aut(X)$.

A subgroup $G$ of the automorphism group $\Aut(X)$ is called \emph{algebraically generated} if it is generated, as an abstract group, by a family of connected algebraic subgroups of $\Aut(X)$. The subgroup of $\Aut(X)$ algebraically generated by all $\Ga$-subgroups is called the \emph{special automorphism group of $X$} and is denoted by $\SAut(X)$.

The set of smooth points of a variety $X$ will be denoted by $X_{\reg}$. A point $p \in X_{\reg}$ is called \emph{flexible} if the tangent space $T_pX$ is spanned by tangent vectors to the orbits $H.x$ of one-parameter unipotent subgroups $H \subseteq \Aut(X)$. An algebraic variety $X$ is called \emph{flexible} if all of its smooth points are flexible.

Strictly speaking, the notion of flexibility is defined not only for the group $\SAut(X)$ but, more generally, for any algebraically generated group $G$, in which case it is referred to as \emph{$G$-flexibility}. The same applies to several other statements concerning the group $\SAut(X)$: in the literature, they are formulated in this greater generality, namely for arbitrary algebraically generated groups. Since these generalizations are not needed for our purposes, we restrict ourselves to the case of the group $\SAut(X)$.

The following proposition can be found in \cite[Corollary 1.11]{Arzhantsev-flexible}.

\begin{proposition}\label{saut_action}
Consider the action of $\SAut(X)$ on $X$.
\begin{enumerate}[label=\alph*)]
\item A point $p \in X_{\reg}$ is flexible if and only if the orbit $\SAut(X).p$ is open.
\item The open $\SAut(X)$-orbit, if it exists, is unique and contains all flexible points of $X_{\reg}$.
\end{enumerate}
\end{proposition}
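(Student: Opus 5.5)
The plan is to derive both parts from a single basic fact: every orbit of $\SAut(X)$ is open in its closure and is smooth. I will use the standard result for algebraically generated groups, namely that for every point $p$ there is a finite sequence of $\Ga$-subgroups $H_1,\dots,H_s$ such that $\SAut(X).p = H_1 H_2 \cdots H_s . p$. In particular the orbit is the image of the morphism $\varphi_p\colon \Ga^s \to X$, $(t_1,\dots,t_s)\mapsto t_1 t_2\cdots t_s . p$. It is therefore constructible and irreducible, hence open in its closure and a homogeneous smooth locally closed subvariety. Moreover, for every $q$ in the orbit, the tangent space $T_q(\SAut(X).q)$ is spanned by the tangent vectors $\xi_H(q)$ to the orbits of $\Ga$-subgroups $H$. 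The inclusion $\supseteq$ is clear. For the reverse inclusion, the differential of $\varphi_p$ at a generic point of $\Ga^s$ is surjective onto the tangent space of the orbit, and its image is spanned by vectors of the form $g_*\xi_{H_i}$. Each of these equals $\xi_{gH_ig^{-1}}$, and $gH_ig^{-1}$ is again a $\Ga$-subgroup. Homogeneity then transports this from a generic point to every point of the orbit.

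For part (a), let $p\in X_{\reg}$ and write $O=\SAut(X).p$. Suppose $O$ is open. Then $O$ has dimension $\dim X$, and $T_pO=T_pX$ because $p$ is smooth and $O$ is an open neighbourhood of $p$. By the tangent-space description above, $T_pX$ is spanned by the vectors $\xi_H(p)$, so $p$ is flexible. Conversely, suppose $p$ is flexible. Then the span of the $\xi_H(p)$, which is contained in $T_pO$, equals $T_pX$, so $\dim O\ge \dim T_pO\ge \dim T_pX=\dim X$, using that $O$ is smooth at $p$ and $p\in X_{\reg}$. Since $X$ is irreducible and $O$ is locally closed of full dimension, $\overline{O}=X$ and $O$ is open.

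For part (b), two open orbits of an irreducible $X$ are nonempty dense open subsets. They must therefore intersect, and orbits that intersect coincide. If $p$ is a flexible smooth point, then by (a) its orbit is open, so it is the unique open orbit.

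The main obstacle is the preliminary fact: that the orbit is the image of a single finite product of $\Ga$-subgroups, which gives constructibility and the tangent-space description. This is the content of the Arzhantsev–Flenner–Kaliman–Kutzschebauch–Zaidenberg result on algebraically generated groups (an analogue of Chevalley's theorem on generated subgroups). Since the paper cites \cite[Corollary 1.11]{Arzhantsev-flexible}, I would invoke their Proposition 1.5 for this step rather than reprove it. If a self-contained argument were needed, I would pick a sequence $H_1,\dots,H_s$ for which $\dim \overline{H_1\cdots H_s.p}$ is maximal. I would then show that this closure is stable under every $\Ga$-subgroup, by irreducibility and maximality of dimension, and that doubling the sequence already yields the whole orbit.
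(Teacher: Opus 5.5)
Your argument is correct, and it is essentially the argument behind \cite[Corollary 1.11]{Arzhantsev-flexible}, which the paper simply cites without proof. It has the same three ingredients: orbits are locally closed via a dominant map from a product of $\Ga$-subgroups, $T_q(\SAut(X).q)$ is spanned by tangent vectors of conjugated $\Ga$-subgroups, and a dimension count at a smooth point finishes the proof.

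One caveat concerns your fallback sketch. Its last step claims that doubling a sequence $H_1,\dots,H_s$ that is dimension-maximal at $p$ already gives the whole orbit. This is false. On $\Affine^2$ with coordinates $(a,b)$, take $H_1=\exp(\KK\, b(b-1)\partial_a)$, $H_2=\exp(\KK\, a(a-1)\partial_b)$ and $p=(2,2)$. Then $H_1H_2.p\supseteq\{b\neq 0,1\}$ is dense, so the sequence is dimension-maximal. However, $q=(1,1)$ is fixed by both $H_1$ and $H_2$, so no word in $H_1,H_2$ sends $p$ to $q$, even though $q\in\SAut(\Affine^2).p=\Affine^2$.

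Fortunately, your proof never needs the equality $\SAut(X).p=H_1\cdots H_s.p$. It only uses that $W=H_1\cdots H_s.p\subseteq O$ is dense in $\overline{O}$, and this already follows from your maximality-and-stability step. From there the argument runs as follows:
\begin{enumerate}
\item $W$ is constructible, so it contains a dense open subset of $\overline O$; homogeneity then makes $O$ open in $\overline O$ and smooth.
\item Generic smoothness of the dominant map $\Ga^s\to\overline O$ gives the tangent-space description at points of $W$.
\item Homogeneity transports that description to all of $O$.
\end{enumerate}
If you do want $O$ as a finite product, the sequence must have dense image from every point $q\in O$, not just from $p$. You can arrange this by covering $O$ with finitely many open loci on which a given sequence has maximal generic rank, and then concatenating those sequences into one sequence $L_1,\dots,L_t$. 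The two sets $L_1\cdots L_t.q$ and $L_1\cdots L_t.p$ are then dense and constructible, so they meet, which gives $q\in L_t\cdots L_1L_1\cdots L_t.p$.
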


Thus, the existence of at least one flexible point guarantees the existence of an open $\SAut(X)$-orbit. Algebraic varieties admitting an open $\SAut(X)$-orbit are called \emph{generically flexible}. Every flexible variety is generically flexible, whereas the converse does not hold in general. Observe that Proposition \ref{saut_action} implies that the flexibility of $X$ is equivalent to the transitivity of the action of $\SAut(X)$ on $X_{\reg}$. A more general theorem, incorporating the notion of infinite transitivity, can be found in \cite[Theorem 0.1]{Arzhantsev-flexible}.

Many examples of varieties $X$ on which $\SAut(X)$ acts transitively are known. The simplest example is $X = \Affine^n$, where $n \geq 1$. The transitivity of the action of $\SAut(\Affine^n)$ is easily verified by considering the locally nilpotent derivations $\frac{\partial}{\partial x_i}$ on $\KK[\Affine^n] = \KK[x_1, \dots, x_n]$ together with the corresponding $\Ga$-subgroups. More will be said about locally nilpotent derivations and $\Ga$-subgroups in Subsection \ref{ssec_preliminaries_LND}. For less elementary examples of flexible varieties, we refer the reader to Section \ref{sec_preliminaries_suspensions_ls}, where the flexibility of $m$-suspensions is discussed.

\subsection{Locally nilpotent derivations}\label{ssec_preliminaries_LND}

Throughout this subsection, we write $A = \KK[X]$ for the coordinate ring of an irreducible affine algebraic variety $X$. Under this assumption, $A$ is an integral domain.

\begin{definition}
A linear map $\delta: A \rightarrow A$ is called a \emph{derivation} if it satisfies the Leibniz rule $\delta(a b) = a \delta(b) + b \delta(a)$.
\end{definition}

\begin{definition}
A derivation $\delta: A \rightarrow A$ is called a \emph{locally nilpotent derivation} (LND) if for every $a \in A$ there exists $n \in \NN$ such that $\delta^n(a) = 0$.
\end{definition}

\begin{remark}
A derivation $\delta$ is locally nilpotent if and only if, for every generator $a_j$ of the algebra $A$, where $a_1, \dots, a_s$ is a generating set of $A$, there exists $n_j$ such that $\delta^{n_j}(a_j) = 0$.
\end{remark}

The sets of all derivations and all locally nilpotent derivations on $A$ are denoted by $\Der(A)$ and $\LND(A)$, respectively. When $A = \KK[X]$, we also write $\Der(X)$ and $\LND(X)$.

The exponential map
\[
\exp(\delta) = \operatorname{id} + \, \delta + \frac{\delta^2}{2!} + \frac{\delta^3}{3!} + \dots \in \Aut(A)
\]
establishes a bijective correspondence between locally nilpotent derivations and $\Ga$-subgroups:
\[
\delta \leftrightarrow \{\exp(s \delta) \; | \; s \in \KK\}.
\]
Conversely, given a $\Ga$-subgroup, the corresponding locally nilpotent derivation is obtained by differentiating its action at the identity:
$\delta(f) = \frac{d (s \cdot f)}{ds} |_{s = 0}$.
Each locally nilpotent derivation $\delta$ defines a vector field, which is also denoted by $\delta$. At every point $p \in X$, this vector field determines the tangent vector to the orbit of the corresponding $\Ga$-subgroup.

\begin{definition}
Let $G$ be a linearly ordered abelian group. A \emph{degree function} is any map $\deg: A \rightarrow G \cup {-\infty}$ satisfying the following conditions for all $a, b \in A$:
\begin{enumerate}
\item $\deg(a) = -\infty \iff a = 0$.
\item $\deg(a b) = \deg(a) + \deg(b)$.
\item $\deg(a + b) \leq \max\{\deg(a), \deg(b)\}$.
\end{enumerate}
\end{definition}

In the above definition, it is understood that $(-\infty) + (-\infty) = -\infty$ and $(-\infty) + g = -\infty$ for every $g \in G$.

Every locally nilpotent derivation $\delta$ induces a degree function on the algebra $A$, defined by
\[
\deg_{\delta}(a) = \min\{n \in \NN \; | \; \delta^{n + 1}(a) = 0\}
\]
for every nonzero $a \in A$. At zero, the function is defined by setting $\deg_{\delta}(0) = -\infty$.

\begin{definition}
Let $S \subset A \setminus {0}$ be a multiplicatively closed subset. The \emph{localization} of $A$ with respect to $S$ is the subring $S^{-1}A \subset \Frac(A)$ defined by
\[
S^{-1} A = \{ab^{-1} \in \Frac(A) \; | \; a \in A, \, b \in S\}.
\]
\end{definition}

We now state several fundamental properties of locally nilpotent derivations, taken from \cite[First Principles for Locally Nilpotent Derivations]{Freudenburg}.

\begin{proposition}
For every $\delta \in \LND(A)$, the kernel $\Ker(\delta)$ is \emph{factorially closed}; that is, if $a, b \in A$ are nonzero and $a b \in \Ker(\delta)$, then $a \in \Ker(\delta)$ and $b \in \Ker(\delta)$.
\end{proposition}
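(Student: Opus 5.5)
The plan is to use the degree function $\deg_{\delta}$ introduced above. Its additivity $\deg_{\delta}(ab) = \deg_{\delta}(a) + \deg_{\delta}(b)$ immediately gives factorial closedness. Indeed, $\Ker(\delta)\setminus\{0\}$ is exactly the set of nonzero elements of degree $0$. So if $a,b \neq 0$ and $ab \in \Ker(\delta)$, then $ab\neq 0$ because $A$ is a domain, and
\[
0 = \deg_{\delta}(ab) = \deg_{\delta}(a) + \deg_{\delta}(b)
\]
with both summands in $\NN$. Hence $\deg_{\delta}(a) = \deg_{\delta}(b) = 0$, that is, $a,b\in\Ker(\delta)$.

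The real work is to justify that $\deg_{\delta}$ is multiplicative; the definition above asserts it is a degree function but does not prove it. For $a,b\neq 0$, let $n=\deg_{\delta}(a)$ and $m=\deg_{\delta}(b)$. I would apply the Leibniz formula
\[
\delta^{k}(ab)=\sum_{i=0}^{k}\binom{k}{i}\delta^{i}(a)\,\delta^{k-i}(b).
\]
For $k=n+m+1$, every term vanishes, since either $i>n$ or $k-i>m$. For $k=n+m$, the only surviving term is $\binom{n+m}{n}\delta^{n}(a)\delta^{m}(b)$. This term is nonzero because $\operatorname{char}\KK=0$ makes the binomial coefficient nonzero, $\delta^n(a)$ and $\delta^m(b)$ are nonzero, and $A$ is a domain. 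Thus $\deg_{\delta}(ab)=n+m$.

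The only delicate points are the use of characteristic zero and the integrality of $A$, and both are standing assumptions here. I therefore expect no genuine obstacle. The one thing to take care over is the exact range of $k$ in the Leibniz expansion, so that the top term is seen to be isolated.
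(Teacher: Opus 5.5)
Your argument is correct and is essentially the standard one. The paper does not prove this proposition itself; it cites Freudenburg's First Principles, where factorial closedness of $\Ker(\delta)$ is likewise deduced from the additivity of $\deg_{\delta}$, which is in turn obtained from the Leibniz formula by isolating the term $\binom{n+m}{n}\delta^{n}(a)\delta^{m}(b)$ (using $\operatorname{char}\KK=0$ and the fact that $A$ is a domain).
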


\begin{proposition}\label{cycle_lnd}
Let $\delta \in \LND(A)$ and let $f_1, \dots, f_n \in A$, where $n \geq 1$. Suppose that there exist positive integers $m_1, \dots, m_n$ and a permutation $\sigma \in S_n$ such that $\delta^{m_i} f_i \in f_{\sigma(i)} A$. Then every orbit of $\sigma$ contains an index $i$ satisfying $\delta^{m_i} f_i = 0$.
\end{proposition}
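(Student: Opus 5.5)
We argue by contradiction, using the degree function $\deg_\delta$ introduced above. Suppose that some orbit $O$ of $\sigma$, say $O = \{i_1, i_2 = \sigma(i_1), \dots, i_r = \sigma(i_{r-1})\}$ with $\sigma(i_r) = i_1$, satisfies $\delta^{m_i} f_i \neq 0$ for every $i \in O$. In particular every $f_i$ with $i \in O$ is nonzero, so $\deg_\delta(f_i) \in \NN$ is defined. Since $\delta^{m_i}f_i \neq 0$, we have $\deg_\delta(f_i) \geq m_i \geq 1$.

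The key step is a degree count along the cycle. For $i \in O$, the hypothesis gives $\delta^{m_i} f_i = f_{\sigma(i)} g_i$ for some $g_i \in A$. The left side is nonzero, so $g_i \neq 0$. Applying $\deg_\delta$ and using multiplicativity (property 2 of a degree function, valid for $\deg_\delta$) yields
\[
\deg_\delta(f_i) - m_i = \deg_\delta(\delta^{m_i} f_i) = \deg_\delta(f_{\sigma(i)}) + \deg_\delta(g_i) \geq \deg_\delta(f_{\sigma(i)}).
\]
Here we use the elementary fact that $\deg_\delta(\delta a) = \deg_\delta(a) - 1$ whenever $\delta a \neq 0$, which follows directly from the definition of $\deg_\delta$. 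This gives the strict inequality $\deg_\delta(f_{\sigma(i)}) \leq \deg_\delta(f_i) - m_i < \deg_\delta(f_i)$.

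Following the cycle $i_1 \to i_2 \to \dots \to i_r \to i_1$, we obtain
\[
\deg_\delta(f_{i_1}) > \deg_\delta(f_{i_2}) > \dots > \deg_\delta(f_{i_r}) > \deg_\delta(f_{i_1}),
\]
which is a contradiction. (Equivalently, summing the inequalities over $O$ gives $\sum_{i\in O} m_i \le 0$, while each $m_i \geq 1$.) Hence every orbit of $\sigma$ contains an index $i$ with $\delta^{m_i} f_i = 0$.

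The only point needing care is that $\deg_\delta$ really is multiplicative, i.e.\ that it is a degree function as the paper asserts. This rests on the Leibniz formula for $\delta^{n}(ab)$ and the fact that $A$ is a domain: if $\deg_\delta a = p$ and $\deg_\delta b = q$, then $\delta^{p+q}(ab) = \binom{p+q}{p}\delta^p a\,\delta^q b \neq 0$ in characteristic zero, and all higher powers of $\delta$ kill $ab$. We take this as known from the stated preliminaries, after which the argument is purely the cyclic degree comparison above.
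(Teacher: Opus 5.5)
Your proof is correct. The cyclic estimate $\deg_\delta f_{\sigma(i)} \le \deg_\delta f_i - m_i < \deg_\delta f_i$ along an orbit, with multiplicativity of $\deg_\delta$ justified by the Leibniz formula in a characteristic-zero domain, is the standard argument for this first principle. The paper gives no proof of its own and simply cites Freudenburg's First Principles, which to my recollection proves it by this same degree count, applied here to each orbit of $\sigma$ separately.
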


\begin{proposition}\label{replica_lnd}
For $\delta \in \Der(A)$ and nonzero $f \in A$,
\[
f \delta \in \LND(A) \iff \delta \in \LND(A) \, and \, f \in \Ker(\delta).
\]
\end{proposition}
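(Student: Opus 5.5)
We prove the two implications separately, taking $\delta \in \Der(A)$ and $0 \neq f \in A$; both directions rely on $A$ being an integral domain.

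For ($\Leftarrow$), suppose $\delta \in \LND(A)$ and $\delta(f) = 0$. Then $f$ is a constant for $\delta$, so by the Leibniz rule $\delta(f a) = f\,\delta(a)$ for every $a \in A$. Induction on $n$ gives
\[
(f\delta)^n(a) = f^n \delta^n(a).
\]
Hence $(f\delta)^n(a) = 0$ as soon as $\delta^n(a) = 0$. Since $f\delta$ is clearly a derivation, it is locally nilpotent.

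For ($\Rightarrow$), suppose $D = f\delta \in \LND(A)$. The first step is to show $f \in \Ker(D)$. Since $D(f) = f\,\delta(f) \in fA$, Proposition \ref{cycle_lnd} applies with $n = 1$, $f_1 = f$, $m_1 = 1$ and $\sigma = \operatorname{id}$. It yields $D(f) = 0$, that is, $f\,\delta(f) = 0$. As $A$ is a domain and $f \neq 0$, we conclude $\delta(f) = 0$, so $f \in \Ker(\delta)$.

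The second step is to show $\delta$ is locally nilpotent. Now $f$ is a $\delta$-constant, so the same computation as in ($\Leftarrow$) gives $D^n(a) = f^n \delta^n(a)$ for all $a \in A$. If $D^n(a) = 0$, then $f^n\delta^n(a) = 0$, and since $A$ is a domain, $\delta^n(a) = 0$. Thus $\delta \in \LND(A)$.

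The only step that is not formal is extracting $\delta(f) = 0$ from local nilpotency of $f\delta$. Proposition \ref{cycle_lnd} settles it directly. Without that proposition, one could argue with the degree function instead: $\deg_D$ is additive, and $D(f) = f\,\delta(f)$ gives $\deg_D(f) - 1 = \deg_D(f) + \deg_D(\delta(f))$ whenever $D(f) \neq 0$. This is impossible because $\deg_D(\delta(f)) \ge 0$. Everything else is the identity $(f\delta)^n = f^n\delta^n$, which holds once $f$ is $\delta$-constant.
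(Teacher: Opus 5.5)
Your proof is correct. The paper gives no proof of its own (it takes this statement from Freudenburg's First Principles). Your argument is the standard proof given in that source: apply the case $n=1$, $\sigma=\operatorname{id}$ of Proposition~\ref{cycle_lnd} to $D=f\delta$ to force $\delta(f)=0$, then use the identity $(f\delta)^n=f^n\delta^n$ and cancel $f^n$ in the domain $A$.
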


The derivation $f \delta$ is called a \emph{replica} of the derivation $\delta$. Note also that, besides the replica operation, the set $\Der(X)$ is closed under conjugation, that is, under the operation $\delta \mapsto g \delta g^{-1}$ for $g \in \Aut(X)$.

\begin{proposition}\label{localization_lnd}
Let $S \subset A \setminus {0}$ be a multiplicatively closed subset and let $\delta \in \Der(A)$. Then
\[
S^{-1} \delta \in \LND(S^{-1} A) \iff \delta \in \LND(A), \, S \subset \Ker(\delta).
\]
Moreover, if the \emph{localized derivation} $S^{-1} \delta$ belongs to $\LND(S^{-1} A)$, then $\Ker(S^{-1} \delta) = S^{-1} (\Ker , \delta)$.
\end{proposition}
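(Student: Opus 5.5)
We prove both directions of Proposition \ref{localization_lnd} and then the kernel equality, treating $S^{-1}\delta$ as the unique extension of $\delta$ to $S^{-1}A$ given by the quotient rule
\[
S^{-1}\delta\!\left(\tfrac{a}{b}\right)=\frac{b\,\delta(a)-a\,\delta(b)}{b^{2}} .
\]

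\emph{($\Leftarrow$).} Assume $\delta\in\LND(A)$ and $S\subset\Ker(\delta)$. Then every $b\in S$ behaves as a constant: $S^{-1}\delta(a/b)=\delta(a)/b$. By induction, $(S^{-1}\delta)^n(a/b)=\delta^n(a)/b$. This vanishes for large $n$ because $\delta$ is locally nilpotent on $A$. Hence $S^{-1}\delta\in\LND(S^{-1}A)$.

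\emph{($\Rightarrow$).} Assume $S^{-1}\delta$ is locally nilpotent.

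1. Its restriction to $A\subset S^{-1}A$ is $\delta$, so $\delta$ is locally nilpotent.
2. To show $S\subset\Ker(\delta)$, take $b\in S$. Then $1/b\in S^{-1}A$ and $b\cdot(1/b)=1\in\Ker(S^{-1}\delta)$.
3. The kernel of an LND on a domain is factorially closed (the earlier proposition, applied to the domain $S^{-1}A$). Both factors are nonzero, so $b\in\Ker(S^{-1}\delta)\cap A=\Ker(\delta)$.

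Alternatively, use the degree function $\deg_{S^{-1}\delta}$: it is additive and $\deg(1)=0$, so $\deg b+\deg(1/b)=0$ with both terms nonnegative, forcing $\deg b=0$.

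\emph{Kernel equality.} Keep the assumption $S\subset\Ker(\delta)$.

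1. The inclusion $S^{-1}\Ker(\delta)\subseteq\Ker(S^{-1}\delta)$ follows from the formula $S^{-1}\delta(a/b)=\delta(a)/b$.
2. Conversely, if $S^{-1}\delta(a/b)=0$, then $\delta(a)/b=0$ in the domain $\Frac(A)$, so $\delta(a)=0$ and $a/b\in S^{-1}\Ker(\delta)$.

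The only point needing care is the step $S\subset\Ker(\delta)$ in the forward direction. It is handled by applying factorial closedness (or additivity of $\deg$) in the localized ring, which is legitimate because $S^{-1}A$ is again a domain.
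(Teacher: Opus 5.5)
Your proof is correct. The paper gives no proof of its own here, quoting the statement from Freudenburg's First Principles, and your argument is the standard one behind it: elements of $S$ are units of $S^{-1}A$ and so lie in $\Ker(S^{-1}\delta)$, and $(S^{-1}\delta)^n(a/b)=\delta^n(a)/b$ once $S\subset\Ker(\delta)$. One point to keep in mind is that you use factorial closedness (or the degree function) on $S^{-1}A$, which need not be finitely generated, whereas the paper states these facts only for $\KK[X]$; they hold for any $\KK$-domain, so the argument goes through.
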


We shall also make extensive use of local slices of derivations.

\begin{definition}
An element $s \in A$ is called a \emph{local slice of $\delta \in \LND(A)$} if $\delta(s) \neq 0$ and $\delta^2(s) = 0$, that is, if $\deg_{\delta}(s) = 1$. In particular, if $\delta(s) = 1$, then $s$ is simply called a \emph{slice}.
\end{definition}

Every nonzero $\delta \in \LND(A)$ admits a local slice. Indeed, it suffices to choose any element $f \in A \setminus \Ker(\delta) \neq \emptyset$ and apply $\delta$ to it exactly $\deg_{\delta}(f)-1$ times.

\begin{definition}
An element $s \in A$ is called a \emph{local slice} (respectively, a \emph{slice}) of $A$ if it is a local slice (respectively, a slice) for some $\delta \in \LND(A)$. When $A = \KK[X]$, we say that $s$ is a local slice (respectively, a slice) of the variety $X$.
\end{definition}

The \emph{Slice Theorem} \cite[Corollary 1.26]{Freudenburg} states that if $s$ is a slice of some $\delta \in \LND(A)$, then $A = (\Ker , \delta)[s]$. In the case $A = \KK[X]$, this can be written as $X \cong \Spec(\Ker , \delta) \times \Affine$. Thus, the existence of a slice on $X$ is equivalent to the existence of an isomorphism $X \cong Z \times \Affine$ for some affine algebraic variety $Z$. The Slice Theorem also implies the following proposition.

\begin{proposition}\label{frac_ker}
Let $\hat{\delta} \in \Der(\Frac(A))$ be the extension of $\delta \in \LND(A)$ to the field $\Frac(A)$. Then $\Ker , \hat{\delta} = \Frac(\Ker , \delta)$.
\end{proposition}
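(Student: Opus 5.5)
We have to show that $\Ker \hat{\delta} = \Frac(\Ker \delta)$, where $\hat{\delta}$ is the unique extension of $\delta$ to $\Frac(A)$ given by the quotient rule. The inclusion $\Frac(\Ker \delta) \subseteq \Ker \hat{\delta}$ is immediate. If $a, b \in \Ker \delta$ with $b \neq 0$, then $\hat{\delta}(a/b) = (b\delta(a) - a\delta(b))/b^2 = 0$. The substance is the reverse inclusion, and the plan is to reduce to the slice case via localization, as the paper's remark suggests.

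If $\delta = 0$ there is nothing to prove, so assume $\delta \neq 0$. As noted above, $\delta$ then has a local slice $r \in A$: we have $\delta(r) \neq 0$ and $\delta^2(r) = 0$, so $c := \delta(r) \in \Ker \delta \setminus \{0\}$. Let $S = \{c^n \mid n \in \NN\}$. This set is multiplicatively closed, avoids $0$ because $A$ is a domain, and lies in $\Ker \delta$.

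By Proposition \ref{localization_lnd}, the localized derivation $S^{-1}\delta$ is locally nilpotent on $A_c = S^{-1}A$, and
\[
\Ker(S^{-1}\delta) = S^{-1}(\Ker \delta) = (\Ker \delta)_c .
\]
Moreover $s = r/c$ satisfies $S^{-1}\delta(s) = c/c = 1$, because $c$ is a constant for $\delta$. So $s$ is a slice of $S^{-1}\delta$. The Slice Theorem \cite[Corollary 1.26]{Freudenburg} then gives
\[
A_c = (\Ker \delta)_c[s],
\]
and $s$ is transcendental over $B := (\Ker \delta)_c$. Transcendence is part of the Slice Theorem; it also follows directly, since $\deg_{\delta}$ of a polynomial in $s$ with coefficients in $B$ equals its $s$-degree. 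Consequently
\[
\Frac(A) = \Frac(A_c) = K(s), \qquad K = \Frac(B) = \Frac(\Ker \delta),
\]
and on $K(s)$ the derivation $\hat{\delta}$ is exactly $d/ds$, since it kills $K$ and sends $s$ to $1$.

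It remains to show that the kernel of $d/ds$ on the rational function field $K(s)$ in characteristic zero is $K$. Let $\phi = p/q \in K(s)$ with $p, q \in K[s]$ coprime and $q$ monic. If $\phi' = 0$, then $p'q = pq'$. Since $\gcd(p,q) = 1$, $q$ divides $q'$. Comparing degrees forces $q' = 0$, so $q$ is constant, hence $q = 1$ because $q$ is monic. Then $p' = 0$, and in characteristic zero this gives $p \in K$.

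The main point needing care is the passage from $A$ to $A_c$. Three things must hold there: the extension $\hat{\delta}$ on $\Frac(A) = \Frac(A_c)$ coincides with the extension of $S^{-1}\delta$, which is clear by uniqueness of extensions of derivations to localizations; the localized kernel is exactly $(\Ker \delta)_c$, which is Proposition \ref{localization_lnd}; and $\Frac((\Ker \delta)_c) = \Frac(\Ker \delta)$, which is clear. With these in place, the reverse inclusion $\Ker \hat{\delta} \subseteq \Frac(\Ker \delta)$ follows, completing the proof.
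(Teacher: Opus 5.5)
Your proof is correct and follows the route the paper intends. The paper gives no separate argument beyond remarking that the proposition follows from the Slice Theorem. Your localization at $c=\delta(r)$ for a local slice $r$ turns $r/c$ into a slice, and the Slice Theorem then gives $\Frac(A)=\Frac(\Ker\,\delta)(s)$ with $\hat{\delta}=d/ds$, which is exactly the standard way to make that implication precise.
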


Finally, we state a sufficient condition under which a linear combination of locally nilpotent derivations is again locally nilpotent.

\begin{proposition}\label{lnds_sum}
Let $\delta = a \delta_1 + b \delta_2$ be a linear combination of $\delta_1, \delta_2 \in \LND(A)$ with coefficients $a, b \in A$. Clearly, $\delta$ is a derivation of $A$. Suppose that the derivations commute, $[\delta_1, \delta_2] = 0$, and that the coefficients satisfy $a \in \Ker , \delta_1 \cap \Ker , \delta_2$ and $b \in \Ker \delta_2$. Then $\delta \in \LND(A)$.
\end{proposition}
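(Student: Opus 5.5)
The approach is to show directly that every element of $A$ is killed by a sufficiently high power of $\delta$. We split $\delta = D_1 + D_2$ with $D_1 = a\delta_1$ and $D_2 = b\delta_2$, and track how each summand moves elements through a bigraded filtration defined by the two degree functions $\deg_{\delta_1}$ and $\deg_{\delta_2}$. We cannot simply say that $D_1, D_2$ are commuting LNDs. Both are LNDs by Proposition \ref{replica_lnd}, but
\[
[D_1, D_2] = a\,\delta_1(b)\,\delta_2 ,
\]
which need not vanish. So a counting argument is needed.

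\textbf{Step 1: the filtration.} For $p, q \in \NN$ put
\[
V_{p,q} = \Ker \delta_1^{p+1} \cap \Ker \delta_2^{q+1}.
\]
Every element $g \in A$ lies in some $V_{p,q}$, namely with $p = \deg_{\delta_1}(g)$ and $q = \deg_{\delta_2}(g)$. By convention $V_{p,q} = 0$ if $p < 0$ or $q < 0$.

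\textbf{Step 2: mapping properties.} Since $[\delta_1,\delta_2]=0$, the map $\delta_1$ sends $\Ker\delta_2^{q+1}$ into itself and lowers $\deg_{\delta_1}$ by one. Hence $\delta_1(V_{p,q}) \subset V_{p-1,q}$, and symmetrically $\delta_2(V_{p,q}) \subset V_{p,q-1}$.

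Multiplication by an element $c$ with $\deg_{\delta_1} c = r_1$ and $\deg_{\delta_2} c = r_2$ sends $V_{p,q}$ into $V_{p+r_1,q+r_2}$. This uses the degree-function property $\deg_\delta(xy) = \deg_\delta x + \deg_\delta y$; since $\deg_\delta$ takes values in $\NN$ and the inequality already suffices, we only need $\deg_\delta(xy) \le \deg_\delta x + \deg_\delta y$, which follows from the Leibniz formula for $\delta^n(xy)$.

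Now $a \in \Ker\delta_1 \cap \Ker\delta_2$ has both degrees $0$, so
\[
D_1(V_{p,q}) \subset V_{p-1,q}.
\]
Write $r = \deg_{\delta_1}(b)$. Since $\deg_{\delta_2}(b) = 0$, we get
\[
D_2(V_{p,q}) \subset V_{p+r,\,q-1}.
\]

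\textbf{Step 3: counting.} Expand $\delta^N(g)$ for $g \in V_{p,q}$ as a sum of words $W$ in $D_1, D_2$ of length $N$.

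A word applies $D_2$ at most $q$ times before landing in some $V_{\ast,-1}=0$, because $D_1$ never changes the second index. If $D_2$ occurs $j \le q$ times, the first index starts at $p$, increases by at most $rj \le rq$, and decreases by one with each occurrence of $D_1$. So a nonzero word contains at most $p + rq$ letters $D_1$.

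Therefore every word of length $N > p + q + rq$ annihilates $g$. Hence $\delta^N(g) = 0$, and $\delta \in \LND(A)$.

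The only delicate point is Step 2's bookkeeping for $D_2$. Multiplication by $b$ may raise $\deg_{\delta_1}$, so the argument must rely on the fact that the number of $D_2$-letters is bounded independently, by $q$. This is exactly where the hypotheses $b \in \Ker\delta_2$ and $a \in \Ker\delta_2$ (the latter so that $D_1$ preserves the second index) enter.
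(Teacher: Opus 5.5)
Your proof is correct, and it takes a different route from the paper's.

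\textbf{The paper's route.} It first reduces to $a=1$ by replacing $\delta_1$ with the replica $a\delta_1$, which still commutes with $\delta_2$. It then normal-orders the operator $(\delta_1 + b\delta_2)^n$ into monomials $C\, b^{r_0}(\delta_1 b)^{r_1}\cdots(\delta_1^k b)^{r_k}\,\delta_1^i\delta_2^j$, where $k=\deg_{\delta_1} b$, $\sum r_l = j$ and $i + j + \sum l\, r_l = n$. For $n = 2m(k+1)$ this forces $i+j \geq 2m$, so every monomial kills any $f$ with $\delta_1^m f = \delta_2^m f = 0$.

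\textbf{Your route.} You never reorder operators. You let words in $D_1 = a\delta_1$ and $D_2 = b\delta_2$ act on $g$ directly, and you track the pair of bounds $(\deg_{\delta_1},\deg_{\delta_2})$ through the bifiltration $V_{p,q}$. The key facts are that $D_1$ lowers the first index and leaves the second alone, while $D_2$ lowers the second index and raises the first by at most $r$.

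\textbf{What your route buys.}
\begin{itemize}
\item No reduction to $a=1$ is needed.
\item You never have to justify the combinatorial shape of the expansion. The paper leaves this as ``easy to verify'', and it silently uses $\delta_2(\delta_1^l b) = \delta_1^l \delta_2 b = 0$ to move $\delta_2$ past the coefficients.
\item You get the sharper explicit bound $\deg_\delta g \leq \deg_{\delta_1} g + (1+r)\deg_{\delta_2} g$, instead of the paper's $\delta^{2m(k+1)} f = 0$ with $m > \max(\deg_{\delta_1} f, \deg_{\delta_2} f)$.
\end{itemize}
Both arguments use commutativity and the kernel conditions on $a$ and $b$ in essentially the same places. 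The paper's version has the small advantage that its expansion is an operator identity independent of the element it is applied to.

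\textbf{Minor points.} Your word-by-word induction remains valid even if an intermediate first index becomes negative, since the value is then $0$; this is implicit in your phrase ``a nonzero word''. The degenerate cases $a=0$ or $b=0$, where the degrees you assign are $-\infty$, are trivial: the corresponding $D_i$ vanishes and the inclusions hold automatically.
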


\begin{proof}
We may assume that $a, b \neq 0$, since otherwise the local nilpotence of $\delta$ follows immediately from Proposition \ref{replica_lnd}. The derivation $\delta_1^{'} = a \delta_1$ is locally nilpotent and satisfies $[\delta_1^{'}, \delta_2] = 0$. Hence, without loss of generality, we may assume that $a = 1$. Let $k = \deg_{\delta_1} b \in \mathbb{N}$.

It is easy to verify that, upon expanding $(\delta_1 + b \delta_2)^n$, every summand is of the form
\[
C b^{r_0} (\delta_1 b)^{r_1} \dots (\delta_1^k b)^{r_k} \delta_1^i \delta_2^j, \;\; C, i, j, r_0, \dots, r_k \in \mathbb{N}
\]
where the exponents satisfy
\[
\sum_{l = 0}^k r_l = j, \;\; i + j + \sum_{l = 0}^k l r_l = n.
\]
These conditions imply the inequality $n \leq i + (k + 1)j$, which yields the estimate
\[
\frac{n}{k + 1} \leq \frac{i}{k + 1} + j \leq i + j \leq n.
\]

Now let $f \in A$ be arbitrary, and choose $m$ such that $\delta_1^m f = \delta_2^m f = 0$. Setting $n = 2m(k + 1)$, we obtain the lower bound $2m \leq i + j$. Therefore, regardless of the values of $i$ and $j$, at least one of them must be greater than or equal to $m$. Consequently, $\delta_1^i \delta_2^j f = 0$, so every summand vanishes.
\end{proof}

\subsection{Makar--Limanov invariant}\label{ssec_preliminaries_Makar-Limanov} 

\begin{definition}
The intersection of the kernels of all locally nilpotent derivations on $X$ is called the \emph{Makar--Limanov invariant}.
\end{definition}

The Makar--Limanov invariant was introduced in \cite{Makar-Limanov}. The invariant $\ML(X)$ coincides with the subalgebra of regular $\SAut(X)$-invariants $\KK[X]^{\SAut(X)}$. The field of rational $\SAut(X)$-invariants $\FKX^{\SAut(X)}$ is denoted by $\FML(X)$ and is called the \emph{field Makar--Limanov invariant}. It can be expressed as the intersection of the kernels of the derivations obtained by extending locally nilpotent derivations from $\KK[X]$ to $\FKX$. Applying Proposition \ref{frac_ker}, we obtain
\[
\FML(X) = \bigcap_{\delta \, \in \, \LND(X)} \Frac(\Ker \, \delta).
\]

The following criterion for generical flexibility in terms of the Makar--Limanov field invariant is given in \cite[Proposition 5.1]{Arzhantsev-flexible}.

\begin{proposition}\label{criteria_fml}
A variety $X$ is generically flexible if and only if $\FML(X) = \KK$.
\end{proposition}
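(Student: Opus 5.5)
The plan is to prove the two implications separately. The implication ``generically flexible $\Rightarrow$ $\FML(X)=\KK$'' is elementary. The converse needs a Rosenlicht-type construction of rational invariants, adapted to the group $\SAut(X)$, which is not algebraic but is algebraically generated.

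\emph{Step 1 ($\Rightarrow$).} Suppose $X$ is generically flexible, and let $O\subseteq X$ be the open $\SAut(X)$-orbit from Proposition \ref{saut_action}. Take $\varphi\in\FML(X)$ and let $U$ be its domain of definition. Since $U$ and $O$ are open and dense, $U\cap O\neq\emptyset$. Fix $p\in U\cap O$. The set $\{g\in\SAut(X) : g.p\in U\}$ is large enough that $\varphi$, being $\SAut(X)$-invariant, takes the value $\varphi(p)$ on the dense subset $U\cap O$ of $O$. Hence $\varphi-\varphi(p)$ vanishes on a dense open set, so $\varphi$ is constant. To make ``constant on orbits'' rigorous, I will argue with the $\Ga$-actions directly. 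For each $\delta\in\LND(X)$, the restriction of $\varphi$ to a general $\exp(s\delta)$-orbit is constant, because $\hat\delta(\varphi)=0$ by Proposition \ref{frac_ker}. Since $O$ is reached from $p$ by finitely many such orbits, $\varphi$ is constant on $O$.

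\emph{Step 2 (reduction to finitely many subgroups).} Suppose now that there is no open orbit. By \cite{Arzhantsev-flexible} (Proposition 1.5 there), there exist finitely many $\Ga$-subgroups $H_1,\dots,H_s$ with the following property. For every $x$ in some open set $V$, the orbit $\SAut(X).x$ coincides with the image of the morphism
\[
\Phi_x\colon H_1\times\dots\times H_s\to X,\qquad (h_1,\dots,h_s)\mapsto h_1\cdots h_s.x .
\]
Consequently, the orbits are constructible and have constant dimension $r<n=\dim X$ on $V$, after shrinking $V$ if necessary. The condition $r<n$ is exactly where the hypothesis ``no open orbit'' enters: if some orbit had dimension $n$, it would be open.

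\emph{Step 3 (constructing invariants; the main obstacle).} Following Rosenlicht, I consider the closure $\Gamma\subseteq X\times X$ of the image of
\[
H_1\times\dots\times H_s\times X\to X\times X,\qquad (h,x)\mapsto (x,\Phi_x(h)).
\]
Over a general point $x$, the fibre of $\Gamma\to X$ is $\overline{\SAut(X).x}$, which is an $r$-dimensional subvariety. By generic flatness, over an open $V'\subseteq V$ these fibres form a flat family. The map $x\mapsto[\overline{\SAut(X).x}]$ into a Hilbert (or Chow) scheme is then a rational map $\psi$. It is constant on orbits, so its coordinate functions lie in $\FML(X)$. The map $\psi$ is nonconstant because distinct orbits have distinct closures generically, and the $r$-dimensional closures cannot all coincide when $r<n$. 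Therefore $\FML(X)\neq\KK$.

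I expect this last step to be the main technical point. It requires checking that generic orbit closures are pairwise distinct, which holds because an orbit is open in its closure. It also requires justifying the Hilbert-scheme rationality of $\psi$; this is the standard argument in Rosenlicht's theorem, applied to the algebraic family $\Gamma$ in place of a group action.
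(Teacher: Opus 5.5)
Your proposal is correct in outline, but it takes a different route from the paper. The paper gives no independent argument. It quotes the statement from \cite{Arzhantsev-flexible} and notes that it is the case $G=\SAut(X)$ of Proposition~\ref{criteria_G}, since $\FML(X)=\FKX^{\SAut(X)}$ and generic flexibility means, by definition, that $\SAut(X)$ has an open orbit. You instead re-prove that criterion. Your Steps~2--3 use only the description $G.x=H_1\cdots H_s.x$ of orbits, so they work verbatim for any algebraically generated $G$; in effect you reproduce the Rosenlicht-type theorem behind Proposition~\ref{criteria_G}. This makes the argument self-contained, apart from that description of orbits and standard algebraic geometry. The cost is a genuinely technical Step~3, which the paper avoids by citation.

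Several details need tightening.

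\emph{Step~1.} Chaining along ``general'' $\Ga$-orbits is the weaker version of your argument, because intermediate points may leave the domain $U$ of $\varphi$. Your first formulation is already rigorous. For $g\in\SAut(X)$ with $p,g.p\in U$, the functions $\varphi\circ g$ and $\varphi$ are regular on $U\cap g^{-1}(U)$ and equal in $\FKX$, hence $\varphi(g.p)=\varphi(p)$.

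\emph{Step~3, the fibres of $\Gamma$.} The equality $\Gamma_x=\overline{\SAut(X).x}$ for general $x$ needs an argument. Every component of every fibre of the dominant map $\Gamma\to X$ has dimension at least $\dim\Gamma-n$. On the other hand, the complement in $\Gamma$ of a dense open subset of the constructible orbit relation has fibres of strictly smaller dimension over a dense open subset of $X$. Together these give the equality.

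\emph{Step~3, the Hilbert scheme.} $X$ is affine and orbit closures are not proper, so a Hilbert scheme of $X$ itself is not directly available. Pass to a projective closure $\overline{X}$ and take the closure of the orbit relation in $X\times\overline{X}$. Then shrink the base so that the fibres are flat, reduced (generic reducedness in characteristic zero), and have no components in $\overline{X}\setminus X$; the last condition follows from the same dimension count. Only then does the Hilbert point of the fibre depend only on the orbit and determine it.

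\emph{Alternative.} Rosenlicht's original device avoids Hilbert schemes altogether. The orbit relation is invariant under $g\times\mathrm{id}$, so the minimal field of definition of the generic fibre of $\Gamma\to X$ lies in $\FML(X)$. If that field were $\KK$, all general orbit closures would coincide, which forces an open orbit.
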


The ordinary Makar--Limanov invariant is not sufficient for this purpose. Indeed, if $X$ is generically flexible, then $\ML(X) = \KK$, whereas the converse does not hold in general. Proposition \ref{criteria_fml} is, in fact, a consequence of the more general result \cite[Corollary 1.14]{Arzhantsev-flexible}.


\begin{proposition}\label{criteria_G}
Let $G$ be an algebraically generated group acting on $X$. Then $G$ has an open orbit if and only if $\FKX^G = \KK$.
\end{proposition}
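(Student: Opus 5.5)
The plan is to prove the two implications separately. Throughout, $G$ is generated by a family $\{H_i\}$ of connected algebraic subgroups, and I use two standard facts: an orbit of $G$ is a locally closed, irreducible constructible set, and finitely many subgroups $H_{i_1},\dots,H_{i_N}$ already produce a whole orbit.

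First, suppose $G$ has an open orbit $U = G.p$. Take a rational invariant $\varphi \in \FKX^G$ and its domain of definition $V$, which is open. Since $V$ is $G$-stable and $U$ is dense, the intersection $U\cap V$ is nonempty and $G$-stable, so it equals $U$. On $U$ the function $\varphi$ is constant, because $\varphi(g.p)=\varphi(p)$ for every $g\in G$. As $U$ is dense, $\varphi$ is constant, which gives $\FKX^G=\KK$.

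For the converse I use the standard orbit-structure argument. Consider the morphisms
\[
\Phi_{I}: H_{i_1}\times\dots\times H_{i_N}\times X \to X\times X,\qquad (h_1,\dots,h_N,x)\mapsto (h_1\cdots h_N.x,\;x).
\]
\begin{enumerate}
\item Choose the sequence $I$ so that the closure of the image has maximal dimension. Its fiber over a general $x$ is then the closure of $G.x$. Since $H_i$ contains the identity, appending factors only enlarges the image, and by maximality the closure is $G$-stable.
\item Let $d$ be the maximal orbit dimension. Orbits of dimension $d$ fill an open subset $X_0$. By Rosenlicht's theorem applied to this algebraically generated setting, there is a $G$-invariant open set $X_1\subseteq X_0$ and a geometric quotient $X_1\to X_1/G$ whose fibers are orbits, with $\KK(X_1/G)=\FKX^G$. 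This is the approach of Popov and of Arzhantsev et al.
\item If $\FKX^G=\KK$, the quotient is a point, so $X_1$ is a single orbit, and it is open.
\end{enumerate}

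The main obstacle is step 2: Rosenlicht's theorem is usually stated for algebraic group actions, whereas $G$ may be infinite-dimensional. To handle this, I would replace $G$ by the family of images of the finite products $H_{i_1}\times\dots\times H_{i_N}$. I would then construct invariants directly: take the closure $Z\subseteq X\times X$ of the image of $\Phi_I$ for the maximal sequence $I$, and view it as a family of orbit closures over the second factor. If general fibers are proper subvarieties, the coefficients of the Chow form of the fiber, or equivalently the point of the Hilbert scheme on a suitable open set, give a nonconstant rational map $X \dashrightarrow$ (Hilbert scheme). This map is constant on orbits because $Z$ is $G$-stable, so its coordinate functions are nonconstant elements of $\FKX^G$. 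Hence $\FKX^G=\KK$ forces the general orbit closure to be all of $X$. Since an orbit is constructible, it then contains a dense open subset, and translating that subset by $G$ shows the orbit itself is open.
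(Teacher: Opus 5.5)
Your proof is correct. The paper does not prove this statement; it imports it from \cite[Corollary 1.14]{Arzhantsev-flexible}, where it comes out of a Rosenlicht-type separation theorem for algebraically generated groups.

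Your final paragraph is in effect a self-contained proof of the part of that theorem you need:
\begin{itemize}
\item the maximal-dimension trick makes the closure $Z$ of the orbit relation $G$-stable;
\item its general fibres are the orbit closures $\overline{G.x}$;
\item the Hilbert (or Chow) point of the fibre produces $G$-invariant rational functions.
\end{itemize}
Compared with the citation, this buys independence from the external theory. The price is generic flatness and Hilbert schemes (or Chow forms) of a projective closure of $X$.

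Some streamlining is worthwhile.
\begin{itemize}
\item Steps 2--3 presuppose the Rosenlicht statement for $G$, which is exactly what is at stake. Delete them rather than trying to ``handle'' them; your last paragraph already concludes without any quotient.
\item The claim that the general fibre of $Z$ is $\overline{G.x}$ is asserted before $G$-stability is established. It needs one line: a fibre-dimension count shows the general fibre is $\overline{H_{i_1}\cdots H_{i_N}.x}$, and $G$-stability of $Z$ then gives $G.x\subseteq Z_x$.
\item Say why the map $\mu\colon x\mapsto[\overline{Z_x}]$ is nonconstant when general orbit closures are proper. The fibre $Z_x$ contains $x$, so a fibre independent of $x$ would contain a dense set of points and hence equal $X$.
\item For invariance of the pulled-back functions under a fixed $g$, you only need $\mu\circ g=\mu$ on the dense open set $X'\cap g^{-1}X'$. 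Here $X'$ is the open set over which the fibres are flat and equal to orbit closures.
\item The concluding openness step does not need the fact that finitely many $H_i$ exhaust an orbit. The set $H_{i_1}\cdots H_{i_N}.x$ is constructible by Chevalley's theorem and, for general $x$, dense. So it already contains a dense open subset of $X$ lying inside $G.x$, and translating by $G$ gives openness.
\end{itemize}
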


\subsection{Additional definitions}\label{ssec_preliminaries_additional}

Let $G \subseteq \Aut(X)$ be a subgroup. The vector fields in $\LND(X)$ generating one-parameter unipotent subgroups of $G$ form a subset $\LND(G) \subseteq \LND(X)$. This subset is closed under replicas $\delta \mapsto f \delta$ for $f \in \Ker \, \delta$ and under conjugation $\delta \mapsto g \delta g^{-1}$ for $g \in G$.

Let $G \subseteq \SAut(X)$ be a $\Ga$-generated subgroup. Consider a subset $\mathcal{N} \subseteq \LND(G)$ such that the corresponding family of $\Ga$-subgroups
\[
\mathcal{H} = \{\exp(\KK \delta) \, | \, \delta \in \mathcal{N}\}
\]
generates $G$. The set $\mathcal{N}$ is called a \emph{generating set} of $G$, and we write $G = \langle \mathcal{N} \rangle$.

\begin{proposition}
Let $\mathcal{N} \subseteq \LND(X)$ and $G = \langle \mathcal{N} \rangle$. Then
\[
\bigcap_{\delta \, \in \, \mathcal{N}} \Ker \, \delta = \KK[X]^G.
\]
\end{proposition}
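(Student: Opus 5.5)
The plan is to prove the two inclusions separately. The whole argument rests on the correspondence between locally nilpotent derivations and $\Ga$-subgroups given by the exponential map: a regular function is annihilated by $\delta$ exactly when it is fixed by every automorphism $\exp(s\delta)$, $s \in \KK$.

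\emph{The inclusion $\bigcap_{\delta \in \mathcal{N}} \Ker \, \delta \subseteq \KK[X]^G$.} Take $a$ with $\delta(a) = 0$ for all $\delta \in \mathcal{N}$. The series defining $\exp(s\delta)$ then collapses to its first term, so $\exp(s\delta)(a) = a$ for every $s \in \KK$. Thus $a$ is fixed by every $\Ga$-subgroup in the family $\mathcal{H}$. By definition $G$ is generated, as an abstract group, by $\mathcal{H}$, so every $g \in G$ is a finite product of elements of these subgroups and their inverses. Each factor fixes $a$, hence so does $g$, and $a \in \KK[X]^G$.

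\emph{The inclusion $\KK[X]^G \subseteq \bigcap_{\delta \in \mathcal{N}} \Ker \, \delta$.} Take $a \in \KK[X]^G$ and $\delta \in \mathcal{N}$. Since $\exp(s\delta) \in G$ for every $s$, we have $\exp(s\delta)(a) = a$ for all $s \in \KK$. By local nilpotency the expression $\exp(s\delta)(a) = \sum_{n} \frac{s^n}{n!}\delta^n(a)$ is a polynomial in $s$ with coefficients in $\KK[X]$. Following the paper's convention for recovering the derivation from the $\Ga$-action, I would differentiate at $s = 0$:
\[
\delta(a) = \frac{d}{ds}\bigl(\exp(s\delta)(a)\bigr)\Big|_{s=0} = \frac{d}{ds}(a) = 0 .
\]
Alternatively, the polynomial $\sum_{n \geq 1} \frac{s^n}{n!}\delta^n(a)$ vanishes for infinitely many $s \in \KK$, so all its coefficients are zero; here we use that $\KK$ is infinite and of characteristic zero. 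In particular $\delta(a) = 0$.

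No step here is a real obstacle. The only point needing care is the second inclusion, where one must pass from invariance under the group elements to vanishing of the derivation. That step is justified by the polynomial dependence on $s$, which is where local nilpotency and characteristic zero are used.
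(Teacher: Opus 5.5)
Your proof is correct and follows the same route as the paper. Both arguments reduce the statement to the equality $\KK[X]^{H} = \Ker\,\delta$ for each $\Ga$-subgroup $H = \exp(\KK\delta)$ with $\delta \in \mathcal{N}$, and then pass between invariance under these generating subgroups and invariance under $G$. The only difference is that you verify $\KK[X]^{H} = \Ker\,\delta$ directly from the exponential series, using that $\exp(s\delta)(a)$ is a polynomial in $s$, whereas the paper simply cites this fact as well known.
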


\begin{proof}
Let $\mathcal{H}$ denote the family of $\Ga$-subgroups corresponding to the set $\mathcal{N} \subseteq \LND(X)$. It is well known that, for every $\Ga$-subgroup $H \in \mathcal{H}$, the algebra of $H$-invariants $\KK[X]^H$ coincides with the kernel $\Ker \, \delta$ of the corresponding locally nilpotent derivation $\delta$. A polynomial invariant under the action of every subgroup $H \in \mathcal{H}$ is invariant under the action of the group $G$ generated by the family $\mathcal{H}$. The converse is immediate. Therefore,
\[
\bigcap_{\delta \, \in \, \mathcal{N}} \Ker \, \delta = \bigcap_{H \, \in \, \mathcal{H}} \KK[X]^H = \KK[X]^G.
\]
\end{proof}

Let $n$ be a nonnegative integer and let $f \in \KK[X]$. Introduce the notation
\[
\Gf{N}{n} = \{\delta \in \LND(X) \, | \, \deg_{\delta} f = n \} \cup \{0\}.
\]
The corresponding group is denoted by $\Gf{G}{n} = \langle \Gf{N}{n} \rangle$.

\begin{proposition}
For every $n \geq 0$, the family $\mathcal{N}^{(n)}_f$ is invariant under conjugation by elements of $\mathcal{G}^{(0)}_f$.
\end{proposition}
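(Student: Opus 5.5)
The plan is to reduce everything to the behaviour of a single generator. Since $\Gf{G}{0}$ is generated as an abstract group by the subgroups $\exp(s\partial)$ with $\partial \in \Gf{N}{0}$, it suffices to show that $g \delta g^{-1} \in \Gf{N}{n}$ for every $\delta \in \Gf{N}{n}$ and every generator $g = \exp(s\partial)$, where $\partial \in \LND(X)$ satisfies $\partial f = 0$ and $s \in \KK$. The inverse of such a generator is $\exp(-s\partial)$, again of the same form. Hence the set of $g \in \Gf{G}{0}$ for which conjugation preserves $\Gf{N}{n}$ contains all generators, is closed under products and inverses, and therefore equals all of $\Gf{G}{0}$. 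The zero derivation is trivially preserved by conjugation.

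First I would record that conjugation preserves local nilpotence. For $g \in \Aut(X)$, acting on $\KKX$ as an algebra automorphism, $g\delta g^{-1}$ is again a derivation. Moreover $(g\delta g^{-1})^k = g\delta^k g^{-1}$, so local nilpotence transfers directly. This step is routine.

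The key step is the degree computation. Using $(g\delta g^{-1})^{k} = g \delta^k g^{-1}$, we get $(g\delta g^{-1})^k f = g\bigl(\delta^k (g^{-1} f)\bigr)$. Since $g^{-1} = \exp(-s\partial)$ and $\partial f = 0$, the exponential series collapses to $g^{-1} f = f$. Thus $(g\delta g^{-1})^k f = g(\delta^k f)$. Because $g$ is injective, this vanishes exactly when $\delta^k f$ vanishes. Therefore $\deg_{g\delta g^{-1}} f = \deg_\delta f = n$, so $g\delta g^{-1} \in \Gf{N}{n}$.

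The only point needing care, which I regard as the main obstacle though it is mild, is the convention for how $\Aut(X)$ acts on functions. I need to confirm that "$g\delta g^{-1}$" is interpreted via the induced algebra automorphism, and that $g$ fixing $f$ is what is used; with the opposite convention one simply swaps $g$ and $g^{-1}$. The argument is symmetric, since $\Gf{G}{0}$ is a group and every element fixes $f$: every generator fixes $f$, hence so does every product of generators. One could also argue directly for arbitrary $g \in \Gf{G}{0}$ using only $g^{\pm1}(f) = f$, which avoids the reduction to generators.
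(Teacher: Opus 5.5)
Your proof is correct and is essentially the paper's argument: conjugation by $g \in \mathcal{G}^{(0)}_f$ keeps $\delta$ locally nilpotent, $g$ fixes $f$, and $(g\delta g^{-1})^k f = g(\delta^k f)$ gives $\deg_{g\delta g^{-1}} f = \deg_\delta f$. The only difference is that you reduce to generators $\exp(s\partial)$ with $\partial f = 0$, which makes explicit the fact $gf = f$ that the paper simply asserts.
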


\begin{proof}
Indeed, for every $g \in \mathcal{G}^{(0)}*f$, we have $g f = f$. On the other hand, $g \delta g^{-1} \in \LND(X)$ for every $\delta \in \LND(X)$. It remains to verify that the degree is preserved under conjugation:
\[
n = \deg_{\delta} f = \deg_{g \delta g^{-1}} gf = \deg_{g \delta g^{-1}} f.
\]
\end{proof}

\section{$m$-suspensions}\label{sec_preliminaries_suspensions}

\begin{definition}
Fix a positive integer $m$. Let $f \in \KK[X] \setminus \KK$, and let $k_1, \dots, k_m$ be positive integers. Define the affine algebraic variety
\[
Y = \Susp(X, f, k_1, \dots, k_m) = \mathbb{V}(y_1^{k_1} y_2^{k_2} \dots y_m^{k_m} - f(x)) \subset X \times \Affine^m,
\]
called the \emph{$m$-suspension} over $X$ with weights $k_1, \dots, k_m$.
\end{definition}

Such a variety $Y$ may be reducible; for example, consider $\Susp(X, x^2, 2, \dots, 2)$. In certain special cases, the question of irreducibility has been resolved. In particular, $m$-suspensions over the affine line ($X \cong \Affine$) have the irreducibility criterion established in \cite{Danielewski}, while $m$-suspensions of the form $\Susp(X, f, 1, \dots, 1)$ over an irreducible variety $X$ are themselves irreducible, as follows from \cite[Lemma 3.1]{Arzhantsev-suspensions}. In the general case, however, it is difficult to guarantee irreducibility by imposing only assumptions on $f(x)$. For this reason, throughout the remainder of the paper we assume that $Y$ is irreducible, an assumption required in the proofs of many of the subsequent results.

In this section, we describe a class of locally nilpotent derivations on an $m$-suspension obtained from locally nilpotent derivations on the base variety $X$. We derive an upper bound for the Makar--Limanov invariant of $m$-suspensions and establish several important consequences.

For convenience, we use multi-index notation: if $\alpha = \sum_{i = 1}^m \alpha_i e_i \in \mathbb{N}^m$, then $y^{\alpha} = y_1^{\alpha_1} \dots y_m^{\alpha_m}$. The vector $k = \sum_{i = 1}^m k_i e_i$ will be referred to as the \emph{weight vector} of the $m$-suspension.

\subsection{Extension of LNDs}

Let $\KK[X]$ be generated by the elements $x_1, \dots, x_s$. Given $\delta \in \LND(X)$, it extends naturally to $\KK[X \times \Affine^m] \cong \KK[X] \otimes \KK[y_1, \dots, y_m]$ as the derivation $\delta^{'}$ defined by
\[
\delta^{'}(x_j) = \delta(x_j), \;\; j = 1, \dots, s, \;\;\;\; \delta^{'}(y_l) = 0, \;\; l = 1, \dots, m.
\]
Using $\delta^{'}$, we construct a new locally nilpotent derivation on $X \times \Affine^m$ that descends to the coordinate ring of the $m$-suspension
\[
\KK[Y] = (\KKX \otimes \KK[y_1, \dots, y_m]) / (y^k - f(x)).
\]
In what follows,
\[
(\Ker \, \delta^{'})_i = (\Ker \, \delta)[y_1, \dots, \hat{y}_{i} \dots, y_m]
\]
denotes the algebra over $\Ker \, \delta$ generated by the variables $y_l$, where $l = 1, \dots, m$ and $l \neq i$.

\begin{definition}\label{lnd_deff}
Let $\delta \in \LND(X)$.

\begin{enumerate}
    \item If $\delta(f) \neq 0$, then $\delta$ is associated with the derivation $\hat{\delta}_i = \hat{\delta}_i(\delta, q) \in \Der(X \times \Affine^m)$ defined by
    \[
    \hat{\delta}_i(\delta, q) = q k_i y^{k - e_i} \delta^{'} + q \delta^{'}(f) \partial_{y_i},
    \]
    where $i \in \{1, \dots, m\}$ is fixed and $q \in (\Ker \, \delta^{'})_i$ is arbitrary.

    \item If $\delta(f) = 0$, then $\delta$ is associated with the derivation $\hat{\delta}(\delta, q) = q \delta^{'} \in \Der(X \times \Affine^m)$, where $q \in \Ker \, \delta^{'} = (\Ker \, \delta)[y_1, \dots, y_m]$ is arbitrary.
\end{enumerate}

\end{definition}

For the sake of uniform notation, we shall occasionally write $\hat{\delta}(\delta, q)$ as $\hat{\delta}_i(\delta, q)$ for an arbitrary index $i$. Which of the two types of derivations is meant by the notation $\hat{\delta}_i(\delta, q)$ will always be clear from the context, depending on whether $\delta(f) \neq 0$ or $\delta(f) = 0$.

A direct computation shows that derivations of both types preserve the ideal $(y^k - f(x))$ and therefore descend to derivations on $Y$. We denote the induced quotient derivations by the same symbols $\hat{\delta}(\delta, q)$ and $\hat{\delta}_i(\delta, q)$. Their local nilpotence on $Y$ follows from the local nilpotence of $\hat{\delta}(\delta, q)$ and $\hat{\delta}_i(\delta, q)$ on $X \times \Affine^m$. Clearly, $\hat{\delta}(\delta, q) \in \LND(X \times \Affine^m)$. We now formulate a criterion for the local nilpotence of $\hat{\delta}_i(\delta, q)$ on $X \times \Affine^m$.

\begin{lemma}\label{lnd_criteria}
Let $\delta \in \LND(X)$ satisfy $\delta(f) \neq 0$, and fix $i \in {1, \dots, m}$.

\begin{enumerate}
    \item If $k_i = 1$, then $\hat{\delta}_i(\delta, q) \in \LND(X \times \Affine^m)$.
    \item If $k_i > 1$, then $\hat{\delta}_i(\delta, q) \in \LND(X \times \Affine^m) \iff \delta^2(f) = 0$.
\end{enumerate}

\end{lemma}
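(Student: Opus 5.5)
The plan is first to reduce to $q = 1$ and then to treat each direction with one of the earlier propositions on locally nilpotent derivations. Throughout we work in the domain $B = \KK[X] \otimes \KK[y_1, \dots, y_m]$. Write $D = \hat{\delta}_i(\delta, 1) = k_i y^{k - e_i} \delta' + \delta'(f) \partial_{y_i}$, so that $\hat{\delta}_i(\delta, q) = q D$.

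\textbf{Reduction to $q=1$.} Since $q \in (\Ker \delta')_i$, it does not involve $y_i$ and lies in $\Ker \delta'$. Hence $D(q) = k_i y^{k-e_i}\delta'(q) + \delta'(f)\partial_{y_i}(q) = 0$. By Proposition \ref{replica_lnd}, $qD$ is locally nilpotent if and only if $D$ is (for $q \neq 0$; the case $q = 0$ is trivial). So it suffices to treat $D$.

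\textbf{Sufficiency via Proposition \ref{lnds_sum}.} Both $\delta'$ and $\partial_{y_i}$ are locally nilpotent on $B$, and they commute because $\delta'$ acts only on $\KK[X]$ and kills every $y_l$. We choose the roles of $\delta_1,\delta_2$ differently in the two cases.
\begin{enumerate}
\item If $k_i = 1$, take $\delta_1 = \delta'$, $\delta_2 = \partial_{y_i}$, $a = y^{k-e_i}$, $b = \delta'(f)$. The monomial $y^{k-e_i}$ contains no $y_i$, so $a \in \Ker \delta' \cap \Ker \partial_{y_i}$. Also $b \in \KK[X] \subset \Ker \partial_{y_i}$. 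Proposition \ref{lnds_sum} then gives $D \in \LND(B)$.
\item If $k_i > 1$ and $\delta^2(f) = 0$, swap the roles: take $\delta_1 = \partial_{y_i}$, $\delta_2 = \delta'$, $a = \delta'(f)$, $b = k_i y^{k-e_i}$. Now $a \in \Ker \partial_{y_i}$ trivially and $a \in \Ker \delta'$ precisely because $\delta^2(f) = 0$. Moreover $b$ is a polynomial in the $y$'s, so $b \in \Ker \delta'$. Proposition \ref{lnds_sum} applies again.
\end{enumerate}

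\textbf{Necessity for $k_i>1$.} This is the only slightly delicate step. Suppose $k_i > 1$ and $\delta^2(f) \neq 0$, and assume for contradiction that $D$ is locally nilpotent. We apply Proposition \ref{cycle_lnd} with $f_1 = y_i$, $f_2 = \delta'(f)$, $m_1 = m_2 = 1$, and $\sigma$ the transposition. We have $D(y_i) = \delta'(f) \in f_2 B$. We also have
\[
D(\delta'(f)) = k_i y^{k-e_i} \delta'^2(f) \in y_i B,
\]
and the membership in $y_i B$ uses exactly $k_i \geq 2$. The proposition then forces $D(f_1) = 0$ or $D(f_2) = 0$. However, $D(y_i) = \delta(f) \neq 0$ by hypothesis. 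Also $D(\delta'(f)) = k_i y^{k-e_i}\delta^2(f) \neq 0$, since $B$ is a domain, $\operatorname{char}\KK = 0$, and $\delta^2(f) \neq 0$. This is a contradiction, so $D$ is not locally nilpotent. Since $\hat{\delta}_i(\delta,q) = qD$, Proposition \ref{replica_lnd} shows $\hat{\delta}_i(\delta,q)$ is not locally nilpotent for $q \neq 0$. This completes the equivalence in part (2). The main point to check carefully is that the cycle proposition is stated for an integral domain, which $B$ is.
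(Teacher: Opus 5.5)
Your proof is correct and is essentially the paper's argument: two applications of Proposition \ref{lnds_sum}, with the roles of $\delta'$ and $\partial_{y_i}$ swapped between the cases $k_i = 1$ and $k_i > 1$, and Proposition \ref{cycle_lnd} for necessity. The differences are cosmetic: you first reduce to $q = 1$ via replicas and use the $2$-cycle $y_i \to \delta'(f) \to y_i$ with $m_1 = m_2 = 1$, whereas the paper applies Proposition \ref{cycle_lnd} to $y_i$ alone with $m_1 = 2$; you also correctly note that necessity requires $q \neq 0$, which the paper leaves implicit.
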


\begin{proof}
In both parts, we use the simple observation that the derivations $\delta^{'}$ and $\partial_{y_i}$ commute. We also note that the conditions $\delta^{'2}(f) = 0$ and $\delta^2(f) = 0$ are equivalent.

1. Suppose that $k_i = 1$. Then
\[
q k_i y^{k - e_i} \in \Ker \, \delta^{'} \cap \Ker \, \partial_{y_i}, \;\;\;\; q \delta^{'} (f) \in \Ker \, \partial_{y_i}.
\]
Proposition \ref{lnds_sum} implies that $\hat{\delta}_i(\delta, q) \in \LND(X \times \Affine^m)$.

2. Assume that $k_i > 1$. If $\delta^{'2}(f) = 0$, then
\[
q k_i y^{k - e_i} \in \Ker \, \delta^{'}, \;\;\;\; q \delta^{'} (f) \in \Ker \, \delta^{'} \cap \Ker \, \partial_{y_i}.
\]
Proposition \ref{lnds_sum} again yields $\hat{\delta}_i(\delta, q) \in \LND(X \times \Affine^m)$.

Conversely, suppose that $\hat{\delta}_i(\delta, q) \in \LND(X \times \Affine^m)$. Differentiating $y_i$ twice, we obtain
\[
\hat{\delta}_i (y_i) = q \delta^{'} (f), \;\;\;\; \hat{\delta}^2_i (y_i) = q^2 k_i y^{k - e_i} \delta^{'2}(f).
\]
Since $\hat{\delta}_i^2(y_i)$ belongs to the ideal $(y^{k - e_i}) \subset (y_i)$, proposition \ref{cycle_lnd} implies that $\hat{\delta}_i^2(y_i) = 0$. Consequently, $\delta^{'2}(f) = 0$.
\end{proof}

\begin{remark}\label{new_lnd_ls}
If $f$ is a local slice of $\delta \in \LND(X)$, then $y_i$ is a local slice of $\hat{\delta}_i(\delta, q)$. Moreover, if $k_i = 1$, then $f$ is also a local slice of $\hat{\delta}_i(\delta, q)$. This simple observation allows to construct a new $m$-suspension over the current one using the same local slice $f$.
\end{remark}

\subsection{Makar--Limanov invariant of an $m$-suspension}

\begin{definition}
For $f \in \KK[X] \setminus \ML(X)$, define the subalgebra
\[
\ML_f(X) = \bigcap_{\substack{\delta \, \in \, \LND(X), \\ \delta (f) \neq 0}} \Ker \, \delta.
\]
\end{definition}

Let $\pi: \KK[X \times \Affine^m] \rightarrow \KK[Y]$ denote the quotient map $\pi(h) = h + I$, where $I = (y^k - f(x))$. In what follows, the expression $\pi(\ML_f(X))$ is understood via the natural embedding of $\ML_f(X)$ into $\KK[X \times \Affine^m]$.

\begin{theorem}\label{ml_theorem}
Let $Y = \Susp(X, f, k_1, \dots, k_m)$.

\begin{enumerate}
\item If $\forall \, i: k_i = 1$ and $f \in \KK[X] \setminus \ML(X)$, then $\ML(Y) \subset \pi(\ML_f(X))$.
\item If $\exists \, i, j: k_i = 1, \, k_j > 1$ and $f$ is a local slice of $X$, then $\ML(Y) \subset\pi(\ML_f(X))$.
\item If $\forall \, i: k_i > 1$ and $f$ is a local slice of $X$, then $\ML(Y) \subset \pi(\KKX^{\Gf{G}{1}})$.
\end{enumerate}

\end{theorem}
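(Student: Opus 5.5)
We will show that every element of $\ML(Y)$ lies in the kernels of enough of the derivations $\hat{\delta}_i(\delta, q)$ from Definition \ref{lnd_deff}, and read off the constraint on its restriction. Let $h \in \ML(Y)$.

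\textbf{Reduction to the part on $X$.} We first produce a convenient representative. Modulo $y^k - f$, any element of $\KK[X \times \Affine^m]$ can be written as $\sum_{\alpha} h_\alpha(x) y^\alpha$, where the multi-indices $\alpha$ satisfy $\alpha \not\geq k$ componentwise. This uses the substitution $y^k \mapsto f$. We claim that the coefficients $h_\alpha$ are uniquely determined. Indeed, $\KK[X\times\Affine^m]$ is a free module over $\KK[X][y^k]$, and the reduction is then the division of $\KK[X][y^k]$ by $y^k-f$, so the monomials $y^\alpha$ with $\alpha\not\geq k$ form a $\KK[X]$-basis of $\KK[Y]$. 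We then aim to show that only $\alpha = 0$ occurs, and that $h_0$ lies in the required subalgebra. This gives $h = \pi(h_0)$.

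\textbf{Applying the extended derivations.} Take $\delta$ with $\delta(f)\neq 0$. In case 1 this is any such $\delta$, by Lemma \ref{lnd_criteria}(1). In cases 2 and 3 we take $\delta$ with $\deg_\delta f = 1$, so that every $\hat{\delta}_i(\delta,1)$ is locally nilpotent by Lemma \ref{lnd_criteria}(2). Then
\[
\hat{\delta}_i(h)=\sum_\alpha \bigl(k_i y^{k-e_i+\alpha}\delta(h_\alpha) + \alpha_i \delta(f) h_\alpha y^{\alpha-e_i}\bigr)=0 \quad\text{in } \KK[Y].
\]
The term $y^{\alpha-e_i}$ is already reduced. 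The term $y^{k-e_i+\alpha}$ reduces to $f\, y^{\alpha - e_i}$ when $\alpha_i\ge 1$, and otherwise to $y^{k-e_i+\alpha}$ itself, possibly after further reductions. We compare coefficients in the basis. Setting $\alpha=0$ and $\alpha_i = 0$ gives a monomial $y^{k-e_i}$, whose coefficient collects $k_i\delta(h_0)$ together with contributions from the $\alpha$ with $\alpha_i=1$ and $\alpha_j = 0$ for other $j$ when $k_i=1$. We would organise this with a suitable grading: give $y$ the multidegree $\alpha$, and note that $y^k - f$ is homogeneous for the $\mathbb{Z}^m/\mathbb{Z}k$-grading in which $\deg x = 0$ and $\deg y_j = e_j$. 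The derivation $\hat{\delta}_i$ is homogeneous of degree $-e_i$ for this grading, so $\ML(Y)$ is spanned by homogeneous elements. It therefore suffices to treat a homogeneous $h$, which is a single class of $\alpha$ modulo $k$. In this setting the key equation reads $k_i y^{k-e_i+\alpha}\delta(h_\alpha) + \alpha_i\delta(f)h_\alpha y^{\alpha-e_i}=0$. If $\alpha\neq 0$, choose $i$ with $\alpha_i>0$. Reducing leads to $(k_i f\delta(h_\alpha)+\alpha_i\delta(f)h_\alpha)\,y^{\alpha-e_i}=0$ (in the case $\alpha_i \geq 1$), that is, $\delta(f^{\alpha_i/k_i}h_\alpha)=0$ formally. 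A degree argument with $\deg_\delta$ rules this out: $\deg_\delta(f^{\alpha_i}h_\alpha^{k_i}) = \alpha_i\deg_\delta f + k_i\deg_\delta h_\alpha > 0$, while the equation forces $\delta(f^{\alpha_i}h_\alpha^{k_i})=0$. Hence $h_\alpha = 0$. For $\alpha = 0$ we get $\delta(h_0)=0$.

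\textbf{Assembling and the main obstacle.} The result is $h_0\in\Ker\delta$ for all admissible $\delta$. This gives $\ML_f(X)$ in case 1. In case 2, using the index $i$ with $k_i = 1$, we also obtain $\ML_f(X)$; here we must show that the index with $k_i = 1$ lets us use all $\delta$ with $\delta(f)\neq0$. In case 3 only $\delta\in\Gf{N}{1}$ are available, giving $h_0 \in \KKX^{\Gf{G}{1}}$ by the invariants proposition. The expected obstacle is case 2. For $k_i=1$ the derivation $\hat\delta_i$ is locally nilpotent for every $\delta$ with $\delta(f)\ne0$, but it only controls the $\alpha$ with $\alpha_i>0$. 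Components with $\alpha_i=0$ but $\alpha_j>0$ for some $j$ with $k_j>1$ must instead be killed using the local-slice derivations from Lemma \ref{lnd_criteria}(2), which is exactly where the hypothesis that $f$ is a local slice enters. We must also check carefully that the reduction and homogeneity bookkeeping is correct when $\alpha$ has components close to $k$.
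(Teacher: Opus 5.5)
Your proof is correct, and it takes a genuinely different route from the paper.

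The paper first computes each kernel exactly, $\Ker\hat\delta_i=\pi((\Ker\delta')_i)$ (Lemma~\ref{cont_lnd_ker}). It does this by localizing at the $y_l$, $l\neq i$, using a $\mathbb{Z}_{k_i}$-grading and factorial closedness in $\KK[Y]$ (hence irreducibility of $Y$), and clearing denominators by a descent on $y_j$-degrees. It then needs Lemma~\ref{intersection_lemma}, with its hypothesis $(A+B)\cap I=0$, to pass $\pi$ through intersections.

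You replace all of this with a normal form. $\KK[Y]$ is free over $\KK[X]$ on $\{y^\alpha:\alpha\not\geq k\}$; your argument via freeness of $\KK[X][y]$ over $\KK[X][y^k]$ is right, since each $\beta\in\NN^m$ is uniquely $\alpha+nk$ with $\alpha\not\geq k$. The nonzero components of the $\mathbb{Z}^m/\mathbb{Z}k$-grading are exactly the $\KK[X]y^\alpha$, and each $\hat\delta_i(\delta,1)$ is homogeneous of degree $-e_i$. This buys you several things:
\begin{itemize}
\item passing $\pi$ through intersections becomes automatic;
\item the degree argument stays inside $\KK[X]$, so irreducibility of $Y$ is never used;
\item in case 1 you never need $\delta^2(f)=0$, the hypothesis under which the paper's kernel lemma is stated;
\item your computation even recovers that lemma: $\Ker\hat\delta_i(\delta,1)=\bigoplus_{\alpha_i=0}(\Ker\delta)\,y^\alpha=\pi((\Ker\delta')_i)$, and $\hat\delta_i(\delta,q)=q\,\hat\delta_i(\delta,1)$.
\end{itemize}

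Two small points to tidy.
\begin{itemize}
\item Homogeneity of the $\hat\delta_i$ does not by itself make $\ML(Y)$ graded. You only need that each homogeneous component of $h$ lies in every $\Ker\hat\delta_i$ you use, and that does follow, since distinct components are sent to distinct degrees.
\item The obstacles you flag are already settled by your own formulas. When $\alpha_i\ge1$, $\alpha-e_i$ is reduced; when $\alpha_i=0$, $k-e_i+\alpha$ has $i$-th entry $k_i-1$ and is reduced. In case 2, the derivations $\hat\delta_l(\delta_f,1)$ for all $l$ kill every $h_\alpha$ with $\alpha\neq0$. The index with $k_i=1$ then gives $\delta(h_0)=0$ for every $\delta$ with $\delta(f)\neq0$.
\end{itemize}
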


The proof will be given later. We first consider several important special cases.

\begin{corollary}
Let $Y = \Susp(X, f, k_1, \dots, k_m)$, where $f \in \KK[X]$. Suppose that $\Gf{N}{0} = {0}$ and one of the following conditions holds:
\begin{enumerate}
\item $\forall \, i: k_i = 1$;
\item $\exists \, i, j: k_i = 1$, $k_j > 1$, and $f$ is a local slice of $X$.
\end{enumerate}
Then $\ML(Y)$ is contained in $\pi(\ML(X))$. Moreover, if $X$ is generically flexible, then $\ML(Y)$ is trivial.
\end{corollary}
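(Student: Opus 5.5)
The plan is to reduce the corollary to Theorem~\ref{ml_theorem} by showing that $\ML_f(X)=\ML(X)$ when $\Gf{N}{0}=\{0\}$. Once that equality holds, cases (1) and (2) of the corollary are exactly cases (1) and (2) of the theorem, and they give $\ML(Y)\subset\pi(\ML_f(X))=\pi(\ML(X))$.

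First we need the hypotheses of Theorem~\ref{ml_theorem}. In case (1) the theorem needs $f\in\KK[X]\setminus\ML(X)$. Since $f\notin\KK$, we cannot have $\LND(X)=\{0\}$: then every $\delta\in\LND(X)$ would be $0$, which lies in $\Gf{N}{0}$ trivially. That situation is degenerate. If $\LND(X)=\{0\}$ and $f\notin\KK$, then $\ML(X)=\KK[X]$, $\ML_f$ is undefined, and the statement needs separate treatment. I would note that in this case the claim is vacuous or trivially interpreted, because the hypothesis is really meant as ``every nonzero LND moves $f$''. Otherwise there is some nonzero $\delta$, and by hypothesis $\deg_\delta f\ne 0$, so $\delta(f)\neq0$ and $f\notin\ML(X)$. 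In case (2), $f$ is a local slice by assumption.

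Next, the key equality. By definition, $\ML_f(X)$ is the intersection of $\Ker\delta$ over those $\delta\in\LND(X)$ with $\delta(f)\ne0$. The condition $\Gf{N}{0}=\{0\}$ says that every nonzero $\delta\in\LND(X)$ satisfies $\deg_\delta f\neq 0$, that is, $\delta(f)\neq0$. So the indexing set is exactly $\LND(X)\setminus\{0\}$. Because $\Ker 0=\KK[X]$ contributes nothing to the intersection, we get $\ML_f(X)=\ML(X)$.

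For the final assertion, suppose $X$ is generically flexible. Then $\FML(X)=\KK$ by Proposition~\ref{criteria_fml}. Since $\ML(X)\subset\FML(X)\cap\KK[X]$, it follows that $\ML(X)=\KK$. Therefore $\ML(Y)\subset\pi(\KK)=\KK$, and since $\KK\subset\ML(Y)$ always holds, $\ML(Y)=\KK$.

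There is no real obstacle here. The only delicate point is checking the hypothesis $f\notin\ML(X)$ needed to apply the theorem, which requires $X$ to admit a nonzero LND. In case (2) this is automatic from the local slice assumption.
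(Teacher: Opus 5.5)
Your argument matches the paper's proof: $\Gf{N}{0}=\{0\}$ means every nonzero $\delta\in\LND(X)$ moves $f$, so $\ML_f(X)=\ML(X)$; cases (1)--(2) of Theorem~\ref{ml_theorem} then give the inclusion, and generic flexibility forces $\ML(X)=\KK$. One correction to your side remark: $f\notin\KK$ does not rule out $\LND(X)=\{0\}$, and that case is not vacuous but an actual counterexample to the literal statement (e.g.\ $X=\Spec\KK[t,t^{-1}]$, $f=t$, $k_1=k_2=1$ gives a rigid $Y\cong(\KK^*)^2$ with $y_1\in\ML(Y)\setminus\pi(\KKX)$), so the corollary tacitly assumes $X$ admits a nonzero LND, exactly as your closing sentence says, and the paper's proof ignores this point as well.
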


\begin{proof}
The condition $\Gf{N}{0} = {0}$ means that there is no $\delta \in \LND(X)$ satisfying $\delta(f) = 0$. Under this assumption, $\ML_f(X) = \ML(X)$. If $X$ is generically flexible, then $\ML(X)$ is trivial. Hence $\ML(Y) = \KK$.
\end{proof}

\begin{corollary}
Let $f$ be a local slice of $X$, and suppose that $\Gf{G}{1}$ acts on $X$ with an open orbit. Then the suspension $Y = \Susp(X, f, k_1, \dots, k_m)$ has trivial Makar--Limanov invariant, regardless of the weights $k_1, \dots, k_m$.
\end{corollary}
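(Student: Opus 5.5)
The plan is to reduce everything to Theorem \ref{ml_theorem} and then use the open orbit through Proposition \ref{criteria_G}. The argument depends on the weights only through which case of the theorem applies. So I split into the three cases of Theorem \ref{ml_theorem}, according to how many of the $k_i$ equal $1$, and in each case show that the upper bound given there is contained in $\pi(\KK)=\KK$.

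\textbf{Common ingredient.} The group $\Gf{G}{1}=\langle \Gf{N}{1}\rangle$ is generated by $\Ga$-subgroups, so it is algebraically generated. By hypothesis it has an open orbit on $X$. Proposition \ref{criteria_G} then gives $\FKX^{\Gf{G}{1}}=\KK$. Since regular invariants are in particular rational invariants,
\[
\KKX^{\Gf{G}{1}}\subseteq \FKX^{\Gf{G}{1}}=\KK .
\]

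\textbf{All $k_i>1$.} Here we are in case (3) of Theorem \ref{ml_theorem}, since $f$ is a local slice. That case gives $\ML(Y)\subset\pi(\KKX^{\Gf{G}{1}})$, and by the common ingredient this is contained in $\pi(\KK)=\KK$.

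\textbf{Some $k_i=1$.} This covers cases (1) and (2) of Theorem \ref{ml_theorem}. Case (2) needs $f$ to be a local slice, which holds by hypothesis. Case (1) needs $f\notin\ML(X)$; this also holds, because $f$ is a local slice of some $\delta$, so $\delta(f)\neq0$. In either case $\ML(Y)\subset\pi(\ML_f(X))$.

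Now compare $\ML_f(X)$ with the $\Gf{G}{1}$-invariants. Every nonzero $\delta\in\Gf{N}{1}$ satisfies $\deg_\delta f=1$, so $\delta(f)\neq0$. Thus the intersection defining $\ML_f(X)$ runs over a larger family than $\Gf{N}{1}$, and therefore
\[
\ML_f(X)\subseteq\bigcap_{\delta\in\Gf{N}{1}}\Ker\,\delta=\KKX^{\Gf{G}{1}},
\]
where the equality is the proposition on generating sets from Subsection \ref{ssec_preliminaries_additional}. The common ingredient then gives $\ML(Y)\subset\pi(\KK)=\KK$.

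\textbf{Conclusion.} Since constants always lie in $\ML(Y)$, we get $\ML(Y)=\KK$ in every case. No step here is a real obstacle once Theorem \ref{ml_theorem} is taken as given. The only points needing care are checking that the hypotheses of case (1) hold, and noting that the kernel of the zero derivation in $\Gf{N}{1}$ is all of $\KKX$, so including it does not affect the intersection.
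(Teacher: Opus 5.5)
Your proof is correct and is essentially the paper's argument. The paper likewise shows $\ML_f(X)\subseteq\bigcap_{\delta\in\Gf{N}{1}}\Ker\,\delta=\KKX^{\Gf{G}{1}}=\KK$ via Proposition \ref{criteria_G}, leaving the case split against Theorem \ref{ml_theorem} implicit; your explicit checks (that $f\notin\ML(X)$ for case (1), and that $\KKX^{\Gf{G}{1}}\subseteq\FKX^{\Gf{G}{1}}$) fill in details the paper omits.
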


\begin{proof}
We have the inclusion
\[
\ML_f(X) = \bigcap_{\substack{\delta \, \in \, \LND(X), \\ \delta (f) \neq 0}} \Ker \, \delta \subseteq \bigcap_{\delta \, \in \, \Gf{N}{1}} \Ker \, \delta = \KKX^{\Gf{G}{1}}.
\]
By Proposition \ref{criteria_G} $\,\KKX^{\Gf{G}{1}} = \KK$.
\end{proof}


\begin{definition}
If $f$ is a local slice of $X$, then $\Susp(X, f, k_1, \dots, k_m)$ is called an \emph{$m$-suspension constructed via the local slice $f$}.
\end{definition}

\subsection{Auxiliary lemmas}

To prove Theorem \ref{ml_theorem}, we first establish several auxiliary lemmas. Recall the notation
\[
(\Ker \, \delta^{'})_i = (\Ker \, \delta)[y_1, \dots, \hat{y}_{i} \dots, y_m].
\]

\begin{lemma}\label{cont_lnd_ker}
Let $\delta \in \LND(X)$ satisfy $\delta(f) \neq 0$ and $\delta^2(f) = 0$. Then the derivation $\hat{\delta}_i(\delta, q) \in \LND(Y)$ is well defined, and its kernel is given by
\[
\Ker \, \hat{\delta}_i = \pi((\Ker \, \delta^{'})_i).
\]
\end{lemma}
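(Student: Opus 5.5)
We take $q \neq 0$; for $q = 0$ the derivation is zero and the statement is degenerate. \textbf{Well-definedness.} By Lemma \ref{lnd_criteria} (part 1 if $k_i=1$, part 2 if $k_i>1$, using $\delta^2(f)=0$), $\hat{\delta}_i(\delta,q) \in \LND(X \times \Affine^m)$. The direct computation noted after Definition \ref{lnd_deff} shows that it preserves the ideal $I=(y^k-f)$. Hence it descends to a derivation of $\KK[Y]$, which is locally nilpotent because every element of $\KK[Y]$ is the image of an element killed by a power of $\hat{\delta}_i$.

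\textbf{Inclusion $\supseteq$.} This is immediate on generators of $(\Ker\,\delta^{'})_i$. For $a \in \Ker\,\delta$ we get $\hat{\delta}_i(a)=qk_iy^{k-e_i}\delta(a)=0$, and for $l\neq i$ we get $\hat{\delta}_i(y_l)=0$.

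\textbf{Inclusion $\subseteq$.} Put $B=\pi((\Ker\,\delta^{'})_i)$ and $d=\delta(f)\in\Ker\,\delta$. Also $q\in B$ and $\hat{\delta}_i = q\,\partial$, where $\partial=\hat{\delta}_i(\delta,1)$. Since $\KK[Y]$ is a domain, $\Ker\,\hat{\delta}_i=\Ker\,\partial$, so we may assume $q=1$.

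The plan is to localize at $d$. By the Slice Theorem applied to the slice $f/d$ of the localized derivation (Proposition \ref{localization_lnd}), $\KKX_d=(\Ker\,\delta)_d[f]$. Hence
\[
\KK[X\times\Affine^m]_d = C[f,y_i], \qquad C=(\Ker\,\delta)_d[y_1,\dots,\hat y_i,\dots,y_m],
\]
with $f,y_i$ algebraically independent over $C$. Modulo $y^k-f$ we can eliminate $f$, so
\[
\KK[Y]_d \cong C[y_i],
\]
a polynomial ring in $y_i$ over $C$, on which $\partial$ acts as $d\,\partial_{y_i}$. Indeed, $\partial$ kills $C$ and $\partial(y_i)=d$. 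The kernel of $d\,\partial_{y_i}$ on $C[y_i]$ is $C = B_d$. Consequently
\[
\Ker\,\partial \subseteq B_d\cap \KK[Y].
\]

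\textbf{Main obstacle: $B_d\cap\KK[Y]=B$.} Suppose $h\in\KK[Y]$ and $d^nh=b\in B$; we want $h\in B$. I would argue with normal forms in the ring $\KK[X][y]$ rather than in $\KK[Y]$.

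First, choose a lift $H\in\KKX[y]$ of $h$. Using $y^k\equiv f$ modulo $I$, every element of $\KK[X][y]$ reduces to a unique form $\sum_{j} h_j\,y_i^{j}$. Here $j<k_i$ or the coefficients $h_j$ involve no monomial divisible by $y^{k}$; this is the standard division by $y^k-f$, viewed as monic of degree $k_i$ in $y_i$ over a suitable coefficient ring. Upstairs nothing is inverted, so the coefficients $h_j$ lie in $\KKX[y_l:l\neq i]$.

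Next, compare this with the expansion in $C[y_i]$. The relation $d^nH\equiv b$ forces $d^nh_j=0$ for $j>0$, hence $h_j=0$. It also gives $d^n h_0 \in (\Ker\,\delta)[y_l:l\neq i]$. Since $\Ker\,\delta$ is factorially closed and $d\in\Ker\,\delta$, each coefficient of $h_0$ in the $y_l$ then lies in $\Ker\,\delta$. Thus $h=\pi(h_0)\in B$.

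The delicate point is to check that the reduction is compatible with the isomorphism $\KK[Y]_d\cong C[y_i]$, i.e.\ that $y_i$-degree is computed consistently upstairs and after localization. If the normal-form bookkeeping becomes unwieldy, the fallback is more direct. Show that $y_i$ is transcendental over $\Frac(B)$ and that the $\partial$-degree of any $h$ equals its $y_i$-degree in the $C[y_i]$ expansion. Then apply factorial closedness of $\Ker\,\delta$ to the leading coefficient to descend from $B_d$ to $B$.
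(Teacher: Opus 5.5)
Your route differs from the paper's. The paper localizes at the multiplicative set generated by the $y_l$, $l\neq i$. It uses the $\mathbb{Z}_{k_i}$-grading by $y_i$-degree and factorial closedness to push the localized kernel into the degree-zero part, reads off $\delta'(h)=0$ there, and descends to $\KK[Y]$ by a $y_j$-degree argument using $f\notin\Ker\,\hat{\delta}_i$. You localize at $d=\delta(f)\in\Ker\,\delta$ instead. The Slice Theorem for $f/d$ then makes $\KK[Y]_d$ the polynomial ring $C[y_i]=(\Ker\,\delta)_d[y_1,\dots,y_m]$ with $\partial=d\,\partial_{y_i}$, so the localized kernel is immediate. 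This is cleaner and more structural, and this part of your argument is correct. You should add that $\pi(q)\neq 0$ and $\pi(d)\neq 0$; this holds because nonzero elements of $\KKX[y_l : l\neq i]$ are not in $I$, by the normal form below. The price is that you use $d\in\Ker\,\delta$, i.e.\ the local-slice hypothesis, essentially. The paper's argument only needs $\hat{\delta}_i$ to be locally nilpotent, so it also covers $k_i=1$ with $\delta^2(f)\neq 0$. That is the generality in which the kernel formula is later invoked, in Lemma \ref{frac_ker_lemma} and Theorem \ref{ml_theorem}(1).

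The descent $B_d\cap\KK[Y]=B$ does not go through as you set it up. Comparing with the expansion in $C[y_i]$ cannot be done by $y_i$-degree, because $\KK[Y]\to C[y_i]$ does not preserve it. The normal-form coefficients $h_j\in\KKX[y_l : l\neq i]$ are polynomials in $f$ over $C$, and $f\mapsto y^{k-k_ie_i}y_i^{k_i}$. Already $\pi(f)$, whose normal form is $f\cdot y_i^0$, has image of $y_i$-degree $k_i$. So the compatibility you flag as delicate actually fails, and that comparison does not justify ``$d^nH\equiv b$ forces $d^nh_j=0$ for $j>0$''. Your fallback does not help either: the real issue is divisibility by $d^n$ inside $B$, not leading coefficients.

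The fix is to stay upstairs and use the uniqueness you already asserted, with the correct justification. Fix any monomial order in $y_1,\dots,y_m$. Every element of $\KKX[y]$ is congruent modulo $I$ to a unique $\KKX$-combination of monomials $y^\alpha$ with $\alpha\not\geq k$. Indeed, a nonzero multiple $(y^k-f)W$ has leading monomial $y^k\,\mathrm{LM}(W)$, which is divisible by $y^k$. This replaces ``monic of degree $k_i$ in $y_i$'', which is false: the $y_i$-leading coefficient is $y^{k-k_ie_i}$. Also, for $j\geq k_i$ the correct condition is that $h_j$ has no monomial divisible by $y^{k-k_ie_i}$.

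Now take $H$ in normal form and a preimage $b_0\in(\Ker\,\delta)[y_l : l\neq i]$ of $b$. Then $d^nH$ is again in normal form, and $b_0$ is in normal form since it contains no $y_i$. From $d^nH\equiv b_0 \pmod I$, uniqueness gives $d^nH=b_0$ as polynomials. Hence $H$ involves no $y_i$, and every coefficient $c$ of $H$ satisfies $d^nc\in\Ker\,\delta$. Factorial closedness gives $c\in\Ker\,\delta$, so $h\in B$. With this replacement your proof is complete, and no comparison with $C[y_i]$ is needed.
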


\begin{proof}
The inclusion from right to left is immediate. We prove the converse. Consider the localizations $S^{-1}\KK[Y]$ and $S^{-1}\KK[X \times \Affine^m]$ with respect to the multiplicative subset $S$ generated by ${y_1, \dots, \hat{y}_i, \dots, y_m}$. For clarity, we display the relationships among the four algebras in the following commutative diagram:
\[
\begin{tikzcd}[sep=large]
\mathbb{K}[X \times \Affine^m] \arrow[r, hook] \arrow{d}{\pi} & S^{-1}\mathbb{K}[X \times \Affine^m] \arrow{d}{\pi} \\
\mathbb{K}[Y] \arrow[r, hook] & S^{-1}\mathbb{K}[Y]
\end{tikzcd}
\]
Here, the quotient map on the right is obtained by extending the left-hand map $\pi$ to $S^{-1}\KK[X \times \Affine^m]$, while the horizontal arrows are the canonical localization embeddings.

As throughout the paper, we assume that $Y$ is irreducible. Since $X$ is irreducible, so is $X \times \Affine^m$. Hence both coordinate rings on the left are integral domains, and therefore so are their localizations. Because $S \subset \Ker \, \hat{\delta}_i$, the derivation $\hat{\delta}_i$ extends to a locally nilpotent derivation on both localized algebras.

The algebra $S^{-1}\KK[Y]$ admits a $\mathbb{Z}_{k_i}$-grading
\[
S^{-1}\KK[Y] = \bigoplus_{l = 0}^{k_i - 1} A_l,
\]
where the grading is determined by the degree in the variable $y_i$. The localized derivation $S^{-1}\hat{\delta}_i$ is homogeneous of degree $-1$ with respect to this grading. Consequently, its kernel is a graded subalgebra of $S^{-1}\KK[Y]$. Suppose there exists a nonzero element $h \in A_l \cap \Ker \, S^{-1}\hat{\delta}_i$ with $l \neq 0$. Then $h$ can be written as $h = y_i \hat{h}$. Since the kernel of a locally nilpotent derivation is factorially closed, it follows that $y_i \in \Ker \, S^{-1}\hat{\delta}_i$, contradicting the equality
\[
(S^{-1}\hat{\delta}_i)(y_i) = q \delta^{'}(f) + S^{-1}I \neq S^{-1}I.
\]
Therefore,
\[
\Ker \, S^{-1}\hat{\delta}_i \subseteq A_0.
\]

Let $(h + S^{-1}I) \in \Ker \, S^{-1}\hat{\delta}_i$ be arbitrary. Since $\Ker \, S^{-1}\hat{\delta}_i \subseteq A_0$, we may choose the representative $h$ in $S^{-1}\KK[X \times \Affine^m]$ to be independent of $y_i$. Then
\[
(S^{-1}\hat{\delta}_i)(h + S^{-1}I) = qk_iy^{k - e_i} (S^{-1}\delta^{'})(h) + S^{-1}I = S^{-1}I.
\]
Passing to the localized polynomial ring $S^{-1}(\KK[X \times \Affine^m])$ yields
\[
qk_iy^{k - e_i} (S^{-1}\delta^{'})(h) \in S^{-1}I.
\]
Since we are working in an integral domain, the ideal $S^{-1}I$ is prime. Hence
\[
(S^{-1}\delta^{'})(h) \in S^{-1}I.
\]
Moreover, both $h$ and $(S^{-1}\delta^{'})(h)$ are independent of $y_i$. Expanding
\[
(S^{-1}\delta^{'})(h) = \left(\sum_{l = 0}^{r} a_l y_i^l\right) (y_i^{k_i} - \hat{f}), \;\;\;\; \hat{f} = \frac{f}{y^{k - k_ie_i}}
\]
and comparing the term of highest degree in $y_i$, we conclude that all coefficients $a_l$ vanish. Thus
\[
(S^{-1}\delta^{'}) (h) = 0,
\]
where $h$ is independent of $y_i$.

Passing back to the quotient gives
\[
S^{-1}\Ker \, \hat{\delta}_i = \Ker \, S^{-1}\hat{\delta}_i \subseteq \pi((\Ker \, S^{-1} \delta^{'})_i) = S^{-1} \pi((\Ker \, \delta^{'})_i).
\]
Both equalities follow from Proposition \ref{localization_lnd} describing the kernels of localized locally nilpotent derivations. In the second equality, we implicitly use that localization commutes with the quotient map $\pi$. Finally, intersecting with the original coordinate ring yields
\[
\Ker \, \hat{\delta}_i \subseteq S^{-1} \pi((\Ker \, \delta^{'})_i) \cap \KK[Y] = \pi((\Ker \, \delta^{'})_i),
\]
which completes the proof. It remains only to justify the equality
\[
S^{-1} \pi((\Ker \, \delta^{'})_i) \cap \KK[Y] = \pi((\Ker \, \delta^{'})_i).
\]

Let $p$ be an element of the intersection. Multiplying $p$ by a suitable element $s \in S$, we may assume that
\[
ps \in \pi((\Ker \, \delta^{'})_i).
\]
Choose $h \in (\Ker \, \delta^{'})_i$ such that $ps - h \in I$. Equivalently,
\[
ps - h = (y^k - f)w
\]
for some $w \in \KK[X \times \Affine^m]$.

Now choose a variable $y_j$, where $j \neq i$, appearing in the product $s$, and show that $h$ is divisible by $y_j$. Consider the $\mathbb{Z}$-grading on $\KK[X \times \Affine^m]$ induced by the degree in $y_j$. The LND $\hat{\delta}_i$ is homogeneous of degree zero with respect to this grading, so its kernel is a graded subalgebra of $\KK[X \times \Affine^m]$. Decompose $h$ and $w$ into their homogeneous components with respect to this grading, and let $h_0$ and $w_0$ denote the components of degree zero. From $ps - h = (y^k - f)w$ we obtain
\[
h_0 - fw_0 \in (y_j),
\]
which implies $fw_0 = h_0$.

Since the kernel is graded, $h_0 \in \Ker \, \hat{\delta}_i$. If $h_0 \neq 0$, then the equality
\[
fw_0 = h_0 \in \Ker \, \hat{\delta}_i,
\]
together with the factorial closedness of $\Ker \, \hat{\delta}_i$, implies that
\[
f \in \Ker \, \hat{\delta}_i,
\]
contradicting the assumptions of the lemma. Hence $h_0 = 0$, so $h$ is divisible by $y_j$.

We now divide both $s$ and $h$ by $y_j$ and repeat the argument, relabeling the resulting elements again by $s$ and $h$. After finitely many iterations, we arrive at the case $s = 1$. Then $p - h \in I$, and therefore
\[
p + I = h + I \in \pi((\Ker \, \delta^{'})_i).
\]
This proves the desired equality.
\end{proof}

To study the intersection of the kernels of LNDs extended to $Y$, it is necessary to determine when the equality
\[
\pi(A_1) \cap \dots \cap \pi(A_n) = \pi(A_1 \cap \dots \cap A_n)
\]
holds for subalgebras $A_1, \dots, A_n \subseteq \KK[X \times \Affine^m]$. The inclusion from right to left is immediate, whereas the converse does not hold in general.

\begin{lemma}\label{intersection_lemma}
Let $\mathcal{M}$ be a family of subalgebras of $\KK[X]$ for some variety $X$, and let $\pi: \KK[X] \rightarrow \KK[Y]$ be the quotient map modulo the ideal $I$. Suppose that there exists a fixed subalgebra $A \in \mathcal{M}$ such that $(A + B) \cap I = 0$ for every subalgebra $B \in \mathcal{M}$. Then
\[
\bigcap_{B \, \in \, \mathcal{M}} \pi(B) = \pi\left(\bigcap_{B \, \in \, \mathcal{M}} B\right).
\]
\end{lemma}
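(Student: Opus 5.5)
The inclusion from right to left is immediate: $\bigcap_{B} B \subseteq B$ for each $B \in \mathcal{M}$, hence $\pi(\bigcap_B B) \subseteq \pi(B)$ for each $B$. The content of the lemma is therefore the reverse inclusion. The plan is to use the distinguished subalgebra $A$ as a reference: every element of the intersection of images has a canonical preimage in $A$, and we show that this preimage lies in every $B$.

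Let $p \in \bigcap_{B \in \mathcal{M}} \pi(B)$. Since $A \in \mathcal{M}$, we have $p \in \pi(A)$, so we may choose $a \in A$ with $\pi(a) = p$. This choice is unique. Indeed, if $a, a' \in A$ both map to $p$, then $a - a' \in A \cap I \subseteq (A + A) \cap I = 0$; the hypothesis applied with $B = A$ gives exactly this.

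Now fix an arbitrary $B \in \mathcal{M}$. Since $p \in \pi(B)$, there is $b \in B$ with $\pi(b) = p$. Then $a - b \in I$. On the other hand, $a - b \in A + B$, because $A + B$ is the set of sums, $B$ is a subspace, and $-b \in B$. The hypothesis $(A + B) \cap I = 0$ therefore forces $a = b$, so $a \in B$. Since $B$ was arbitrary, $a \in \bigcap_{B \in \mathcal{M}} B$, and thus $p = \pi(a) \in \pi(\bigcap_B B)$. This proves the reverse inclusion.

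There is no real obstacle. The only point needing care is to read $A + B$ as the linear sum of subspaces, which is what makes $a - b$ an element of $A + B$. Note also that the argument uses only that $A$ and each $B$ are linear subspaces, not that they are subalgebras.
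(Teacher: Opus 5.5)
Your proof is correct and matches the paper's argument: you lift an element of $\bigcap_{B} \pi(B)$ to some $a \in A$, compare it with a lift $b \in B$, and use $(A+B)\cap I = 0$ to conclude $a = b \in B$. The uniqueness of the lift in $A$ and your remark that only the linear-subspace structure is used are harmless extras that the argument does not need.
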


\begin{proof}
It suffices to prove the inclusion from left to right. Every element of the left-hand intersection can be represented as $a + I$ for some $a$ belonging to the fixed subalgebra $A \in \mathcal{M}$. Since
\[
a + I \in \pi(B)
\]
for every subalgebra $B \in \mathcal{M}$, there exists an element $b \in B$ such that
\[
a + I = b + I.
\]
Hence $a - b \in I$. Since $a - b \in A + B$, the assumption of the lemma implies that $a - b = 0$ and therefore $a = b \in B$. As $B$ was arbitrary, we conclude that
\[
a \in \bigcap_{B \, \in \, \mathcal{M}} B
\]
whence
\[
a + I \in \pi\left(\bigcap_{B \, \in \, \mathcal{M}} B\right).
\]
\end{proof}

\begin{lemma}\label{frac_ker_lemma}
Let $Y = \Susp(X, f, k_1, \dots, k_m)$.

\begin{enumerate}
    \item Fix $\delta \in \LND(X)$ such that $\delta(f) \neq 0$. If either $\forall \, i: k_i = 1$ or $\delta^2(f) = 0$, then
    \[
    \bigcap_{i = 1}^m \Ker \, \hat{\delta}_i(\delta, q) = \pi(\Ker \, \delta).
    \]

    \item Fix $i$ such that $k_i = 1$. If $f \in \KK[X] \setminus \ML(X)$, then
    \[
    \bigcap_{\substack{\delta \, \in \, \LND(X) \\ \delta (f) \neq 0}} \Ker \, \hat{\delta}_i(\delta, q) =\pi(\ML_f(X)[y_1, \dots, \hat{y}_{i}, \dots, y_m]).
    \]
\end{enumerate}

\end{lemma}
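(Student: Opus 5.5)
Both parts reduce to Lemma \ref{cont_lnd_ker}, which computes each individual kernel, combined with Lemma \ref{intersection_lemma}, which lets us pass the intersection through $\pi$. The only additional work is to check that the hypothesis $(A+B)\cap I = 0$ holds in each case, using a degree argument in the variables $y$.

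For part (1), fix $\delta$ with $\delta(f)\neq 0$, and take $q$ as in Definition \ref{lnd_deff}. First we need $\Ker \hat{\delta}_i = \pi((\Ker \delta')_i)$ for every $i$. When $\delta^2(f)=0$ this is exactly Lemma \ref{cont_lnd_ker}. When all $k_i=1$ but $\delta^2(f)\neq 0$, I would rerun the proof of Lemma \ref{cont_lnd_ker}. The $\mathbb{Z}_{k_i}$-grading step is vacuous because $k_i=1$. The remaining steps use only that $\hat{\delta}_i$ is an LND, which holds by Lemma \ref{lnd_criteria}(1), that $\delta'$ and $\partial_{y_i}$ commute, and that $f\notin \Ker \hat{\delta}_i$ since $\hat{\delta}_i(f)=q\delta(f)\neq 0$. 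In particular, when $k_i=1$ one can solve $y_i = f/y^{k-e_i}$ in the localization, so the kernel computation goes through directly.

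Next, apply Lemma \ref{intersection_lemma} to $\mathcal{M}=\{(\Ker\delta')_i\}_{i=1}^m$, choosing $A=(\Ker\delta')_1$. For any $j$, every element of $A+(\Ker\delta')_j$ lies in $\Ker\delta'=(\Ker\delta)[y_1,\dots,y_m]$. So it suffices to show that $\Ker\delta'\cap I=0$. Suppose $h=(y^k-f)w$ with $h\in\Ker\delta'$. Apply $\delta'$: since $y^k\in\Ker\delta'$,
\[
0=\delta'(h) = -\delta'(f)\,w + (y^k-f)\,\delta'(w).
\]
Now compare top $y$-degrees, taking $w\neq 0$ of total degree $d$ in $y$. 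The right-hand side has top component $y^k\,\delta'(w)_{d}$ in degree $d+|k|$. Hence $\delta'$ annihilates the top component $w_d$ of $w$. Inducting downward on the degree, or using a $\deg_{\delta'}$ argument, I expect this to force $\delta'(f)\,w=(y^k-f)\delta'(w)$ to be impossible unless $w=0$. A cleaner way to see it: evaluate along $y$ with $y^k$ fixed, or use the fact that $\deg_{\delta'}(y^k-f)=\deg_\delta f\ge 1$. Then $\deg_{\delta'}h=\deg_{\delta'}(y^k-f)+\deg_{\delta'}w\ge 1$, contradicting $h\in\Ker\delta'$. This degree-function argument is the one I would use.

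With the hypothesis verified, the intersection equals $\pi\bigl(\bigcap_i(\Ker\delta)[y_1,\dots,\hat y_i,\dots,y_m]\bigr)$. Comparing monomials in $y$, an element of this intersection involves no $y_i$ for any $i$, so the intersection is just $\Ker\delta$. This proves part (1).

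For part (2), $i$ is fixed with $k_i=1$, and $\delta$ ranges over LNDs with $\delta(f)\neq 0$. By the same kernel computation, valid here via Lemma \ref{lnd_criteria}(1) without needing $\delta^2(f)=0$, each kernel equals $\pi((\Ker\delta)[y_1,\dots,\hat y_i,\dots,y_m])$. All of these subalgebras, together with their pairwise sums, lie in $\KK[X][y_1,\dots,\hat y_i,\dots,y_m]$. This ring meets $I$ trivially: an element $(y^k-f)w$ with $w\neq0$ has positive degree in $y_i$, because $k_i=1$. So Lemma \ref{intersection_lemma} applies with any member as $A$. Finally, an intersection of subrings of the form $(\Ker\delta)[\ldots]$ is computed coefficientwise in the $y$-monomials, giving $\ML_f(X)[y_1,\dots,\hat y_i,\dots,y_m]$.

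The main obstacle I expect is justifying the kernel formula when $\delta^2(f)\neq0$, that is, checking that the proof of Lemma \ref{cont_lnd_ker} genuinely never used $\delta^2(f)=0$ beyond local nilpotence.
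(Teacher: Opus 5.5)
Your proposal is correct and follows the paper's route: you compute each kernel via Lemma \ref{cont_lnd_ker} and pass the intersection through $\pi$ with Lemma \ref{intersection_lemma}. Along the way you supply details the paper leaves implicit — the check that $(A+B)\cap I=0$ (via $\deg_{\delta'}$ in part (1) and the $y_i$-degree in part (2)), and the observation that the kernel formula of Lemma \ref{cont_lnd_ker} still holds when $k_i=1$ without assuming $\delta^2(f)=0$ — with only a harmless slip: for $k_i=1$ one has $\hat{\delta}_i(f)=q\,y^{k-e_i}\delta(f)$ rather than $q\,\delta(f)$, which is still nonzero.
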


\begin{proof}
We begin by observing that in both cases all derivations $\hat{\delta}_i(\delta, q)$ whose kernels appear in the left-side intersections are locally nilpotent.

1. Since
\[
((\Ker \, \delta^{'})_i + (\Ker \, \delta^{'})_j) \cap I = 0
\]
for arbitrary $i$ and $j$, we may apply Lemma \ref{intersection_lemma} to obtain
\[
\bigcap_{i = 1}^m \Ker \, \hat{\delta}_i(\delta, q) = \bigcap_{i = 1}^m \pi((\Ker \, \delta^{'})_i) = \pi\left(\bigcap_{i = 1}^m (\Ker \, \delta^{'})_i \right) = \pi(\Ker \, \delta).
\]

2. Observe that
\[
((\Ker \, \delta^{'}_1)_i + (\Ker \, \delta^{'}_2)_i) \cap I = 0.
\]
The desired equality now follows from Lemma \ref{intersection_lemma}:
\[
\bigcap_{\substack{\delta \, \in \, \LND(X) \\ \delta (f) \neq 0}} (\Ker \, \delta)[y_1, \dots, \hat{y}_{i} \dots, y_m] = \ML_f(X)[y_1, \dots, \hat{y}_{i}, \dots, y_m].
\]
\end{proof}

\subsection{Proof of the theorem \ref{ml_theorem}}

Now we have all the ingredients to give the proof. Throughout the proof, we implicitly apply Lemma \ref{intersection_lemma}.

1. Applying part (1) of Lemma \ref{frac_ker_lemma} to every $\delta \in \LND(X)$ satisfying $\delta(f) \neq 0$, we obtain
\[
\ML(Y) \subseteq \bigcap_{\substack{\delta \, \in \, \LND(X) \\ \delta (f) \neq 0}} \pi(\Ker \, \delta) = \pi(\ML_f(X)).
\]

2. By the definition of a local slice of $X$, there exists $\delta_f \in \LND(X)$ such that $\delta_f(f) \neq 0$ and $\delta_f^2(f) = 0$. Apply part (1) of Lemma \ref{frac_ker_lemma} to $\delta_f$, and part (2) to an index $i$ with $k_i = 1$. Intersecting the results yields
\[
\ML(Y) \subseteq \pi(\Ker \, \delta_f) \cap \pi(\ML_f(X)[y_1, \dots, \hat{y}_{i}, \dots, y_m]) = \pi(\ML_f(X)).
\]

3. For each $\delta \in \Gf{N}{1}$, apply part (1) of Lemma \ref{frac_ker_lemma} to obtain
\[
\ML(Y) \subseteq \bigcap_{\delta \, \in \, \Gf{N}{1}} \pi(\Ker \, \delta) = \pi\left(\bigcap_{\delta \, \in \, \Gf{N}{1}} \Ker \, \delta\right) = \pi(\KKX^{\Gf{G}{1}}).
\]

\section{$m$-suspensions constructed via a local slice}\label{sec_preliminaries_suspensions_ls}

The flexibility of $m$-suspensions with weights satisfying $\forall \, i : k_i = 1$ was established in \cite[Theorem 3.2]{Arzhantsev-suspensions}.

\begin{theorem}\label{flex_1}
Let $X$ be a flexible irreducible affine variety. Suppose that either $X \cong \Affine$ or $\dim , X \ge 2$. Then the suspension $\Susp(X, f, 1, 1)$ is flexible.
\end{theorem}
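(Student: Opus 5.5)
The goal is to show that $\SAut(Y)$ acts transitively on $Y_{\reg}$ for $Y=\Susp(X,f,1,1)=\VV(y_1y_2-f)$. I would produce enough LNDs of the form $\hat{\delta}_1(\delta,q)$, $\hat{\delta}_2(\delta,q)$ (Definition \ref{lnd_deff}) and check flexibility at every smooth point. Since $k_1=k_2=1$, Lemma \ref{lnd_criteria}(1) makes all of these locally nilpotent, with no local slice assumption needed. Explicitly,
\[
\hat{\delta}_1(\delta,q)=q\,y_2\,\delta' + q\,\delta(f)\,\partial_{y_1},\qquad q\in(\Ker\,\delta)[y_2],
\]
and symmetrically for $\hat\delta_2$. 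There are also the derivations $\hat\delta(\delta,q)=q\delta'$ when $\delta(f)=0$.

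\textbf{Step 1: the open chart.} Let $U=\{y_1\neq0\}\cup\{y_2\neq0\}\subset Y$. On $\{y_2\ne0\}$ the projection $(x,y_1,y_2)\mapsto(x,y_2)$ identifies the chart with $X\times(\Affine\setminus0)$. On this chart, the fields $\hat\delta_1(\delta,q)$ restrict to $qy_2\delta$ along $X$ (with $y_1=f/y_2$ determined). Since $X$ is flexible, at a smooth point of $X$ the vectors $\delta$ span $T_xX$, and the factor $y_2\neq0$ does not matter. Thus $\SAut(Y)$ moves points inside each fibre $X_{\reg}\times\{y_2\}$ transitively, using conjugation together with Proposition \ref{saut_action} applied on $X$. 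To change $y_2$ I would use $\hat\delta_2(\delta,q)=qy_1\delta'+q\delta(f)\partial_{y_2}$ at a point with $\delta(f)\neq0$. Such a point exists in any fibre: since $f\notin\KK$ and $X$ is flexible, some LND does not kill $f$ at a generic point, and we can first move within the fibre to reach it. Alternating the two families shows that all points of $U$ lying over $X_{\reg}$ form one orbit.

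\textbf{Step 2: the locus $y_1=y_2=0$ and the points over $X\setminus X_{\reg}$.} Smooth points of $Y$ with $y_1=y_2=0$ lie over $\{f=0\}$, and there smoothness forces $df\neq0$, so $x\in X_{\reg}$. At such a point $p$ I would take $\delta$ with $\delta(f)(x)\ne0$ and $q=1$. Then $\hat\delta_1(p)=\delta(f)(x)\partial_{y_1}\ne0$, so the $\Ga$-orbit leaves the locus $\{y_1=y_2=0\}$ and enters $U$. Such a $\delta$ exists because the derivatives $\delta(f)(x)$ over flexible directions detect $df_x\neq0$.

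Next take smooth points of $Y$ with, say, $y_2\neq0$ but $x\in X\setminus X_{\reg}$. Since the chart is isomorphic to $X\times(\Affine\setminus0)$, such points are singular on $Y$, so they do not occur. Combining this with Step 1, $Y_{\reg}$ is a single orbit.

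\textbf{Main obstacle.} The delicate part is Step 1's claim that in each fibre we can reach a point where some $\delta$ has $\delta(f)\neq0$ and then really move $y_2$ there. This is where the hypothesis $\dim X\ge2$ or $X\cong\Affine$ should enter. If $\dim X=1$ and $X\not\cong\Affine$, then $X$ is not flexible anyway. When $\dim X\ge2$, I would use infinite transitivity of $\SAut(X)$ on $X_{\reg}$ (Theorem 0.1 of \cite{Arzhantsev-flexible}) to choose automorphisms fixing chosen points. Together with replicas $q\delta$ with $q(x)=0$, this lets us move $y_2$ while keeping $x$ essentially fixed. I would handle this step carefully, following the argument of \cite{Arzhantsev-suspensions}.
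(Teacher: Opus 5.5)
Your argument is correct and follows the same approach the paper adapts from \cite{Arzhantsev-suspensions} (compare Lemma~\ref{Vc}, step~1 of the proof in Subsection~\ref{proof_theorem_orbits}, and Lemma~\ref{along_U}): lift $\SAut(X)$ to each section $\{y_2=c\}$ with $c\neq 0$, pass between sections along a $\hat{\delta}_2$-orbit through a point where $\delta(f)(x)\neq 0$, and leave $\{y_1=y_2=0\}$ along a $\hat{\delta}_1$-orbit. The ``main obstacle'' in your last paragraph is not an obstacle: along that $\hat{\delta}_2$-orbit, $y_2$ is a polynomial in $t$ with linear coefficient $\delta(f)(x)\neq 0$, so the orbit meets every section, and transitivity on the target section finishes the step without keeping $x$ fixed; infinite transitivity and the hypothesis on $\dim X$ are needed only for the infinite-transitivity version proved in \cite{Arzhantsev-suspensions}, and for plain flexibility that hypothesis is vacuous, as you yourself note.
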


Under the assumptions of the theorem, the $m$-suspension $Y = \Susp(X, f, 1, \dots, 1)$ is also flexible. Indeed, it suffices to consider a suitable sequence
\[
X^{(n)} = \Susp(X^{(n - 1)}, f_{n}, 1, 1),
\]
where $X^{(0)} = X$ and $X^{(p)} = Y$ for some $p$, and then apply the theorem successively to each variety $X^{(n)}$. Alternatively, one may observe that the proof of Theorem \ref{flex_1} extends with only minor modifications to the general case of $\Susp(X, f, 1, \dots, 1)$.

In this section, we prove an analogous theorem for $m$-suspensions whose weight vector contains both weights equal to $1$ and weights greater than $1$. In this setting, extending locally nilpotent derivations from $X$ to the $m$-suspension requires assuming that $f \in \KKX$ is a local slice of $X$. Otherwise, for every index $i$ with $k_i > 1$, the derivation $\hat{\delta}_i(\delta, q)$ fails to be locally nilpotent for every $\delta \in \LND(X)$ (see Lemma \ref{lnd_criteria}).

For a fixed index $i$, let $G_{y_i}$ denote the algebraically generated subgroup generated by all $\Ga$-subgroups of the form $\exp(\KK \hat{\delta}_i)$. We further denote by $G$ the subgroup generated by all groups $G_{y_i}$, where $i = 1, \dots, m$, and refer to it as the \emph{group generated by the extended LNDs}. Clearly, $G \subseteq \SAut(Y)$ for the $m$-suspension $Y$.

Using this terminology, we explicitly describe all $G$-orbits in $Y_{\reg}$ under the assumption that $X$ is flexible (see Theorem \ref{orbit_theorem}). In particular, we show that $Y$ is generically flexible. We also formulate sufficient conditions for the flexibility of $Y$ and provide an explicit construction of a local slice $f \in \KKX$ that guarantees the flexibility of the $m$-suspension $\Susp(X, f, 1, k_2, \dots, k_m)$ for arbitrary weights $k_2, \dots, k_m$, whenever $X$ is flexible.

\subsection{Decomposition into $G$-orbits}

\begin{definition}
Let $f$ be a local slice of the variety $X$. Define the subvariety $\Df \subset X$ as $\VV(\If)$ where the ideal $\If$ is generated by the set
\[
\{f\} \cup \{\delta f \, | \, \delta \in \Gf{N}{1} \}.
\]
\end{definition}

\begin{proposition}
The following properties hold.
\begin{enumerate}
\item If $f$ is a slice of $X$, then $\Df = \emptyset$.
\item The subvariety $\Df$ is invariant under the action of $\Gf{G}{0}$ on $X$.
\end{enumerate}
\end{proposition}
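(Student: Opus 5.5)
For part (1), I would produce an element of the ideal $\If$ that is a nonzero constant. If $f$ is a slice of $X$, then by definition there is $\delta \in \LND(X)$ with $\delta(f) = 1$. Then $\delta^2(f) = 0$ and $\delta(f) \neq 0$, so $\deg_\delta f = 1$ and $\delta \in \Gf{N}{1}$. Hence $\delta f = 1$ is one of the generators of $\If$, so $\If = \KKX$ and $\Df = \VV(\If) = \emptyset$.

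For part (2), the plan is to show that every $g \in \Gf{G}{0}$, viewed as an automorphism of $\KKX$, maps the generating set of $\If$ into $\If$. It then follows that $g(\If) \subseteq \If$, and applying the same to $g^{-1}$ gives $g(\If) = \If$, so the zero set $\Df$ is $g$-stable. Since $\Gf{G}{0}$ is generated by the $\Ga$-subgroups $\exp(\KK\partial)$ with $\partial \in \Gf{N}{0}$, each of which fixes $f$, every $g \in \Gf{G}{0}$ satisfies $g f = f$. This settles the generator $f$.

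For a generator $\delta f$ with $\delta \in \Gf{N}{1}$, I would use the identity $g(\delta f) = (g\delta g^{-1})(g f) = (g\delta g^{-1})(f)$. The earlier proposition says that $\Gf{N}{1}$ is invariant under conjugation by elements of $\Gf{G}{0}$. Therefore $g\delta g^{-1} \in \Gf{N}{1}$, and $g(\delta f)$ is again a generator of $\If$.

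The only point needing care is the convention for the action: points versus functions, and $g$ versus $g^{-1}$. It is harmless here, since $\Gf{G}{0}$ is a group and the conjugation-invariance holds for all of its elements. So I expect no real obstacle; the main content is the conjugation-invariance proposition already proved.
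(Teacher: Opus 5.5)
Your proposal is correct and follows the paper's proof. Part (1) exhibits $1 = \delta f \in \If$ for a slice derivation $\delta \in \Gf{N}{1}$. Part (2) rests on the conjugation-invariance of $\Gf{N}{1}$ under $\Gf{G}{0}$ together with $gf = f$, and you simply spell out the identity $g(\delta f) = (g\delta g^{-1})(f)$ and the resulting equality $g(\If) = \If$, which the paper leaves implicit.
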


\begin{proof}
1. There exists $\delta_f \in \LND(X)$ such that $\delta_f(f) = 1$. Hence $\delta_f \in \Gf{N}{1}$, and therefore $1 \in \If$.

2. This follows immediately from the invariance of $\Gf{N}{1}$ under conjugation by elements of $\Gf{G}{0}$ on $\LND(X)$.
\end{proof}

Now let $J_X(p)$ denote the Jacobian matrix of the variety $X \subseteq \Affine^n$ at a point $p \in X$, and let $J_Y(p)$ denote the Jacobian matrix of the subvariety $Y \subseteq X$ at a point $p \in Y$, obtained from $J_X(p)$ by adjoining the rows corresponding to the defining equations of $Y$.

\begin{definition}
Define $\RegY$ as the set that consists precisely of those points $p \in Y$ for which $\rk J_Y(p) = n - \dim Y$.
\end{definition}

Since $r = n - \dim X$ is the maximal rank of $J_X$, this condition can also be written as $\rk J_Y(p) = r + \codim_X Y$. Let us also notice that $\RegY$ and $Y_{reg}$ do not coincide in general case. One can think of $\RegY$ as of regular points of $Y$ with respect to the fact that $Y$ is a subvariety of $X$.

Regarding the latter notion, our main object of interest will be the set $\RegVf$ for the divisor $\VV(f) \subseteq X$. Its significance will become apparent after the proof of Lemma \ref{lemma_reg}. For now, we only note that
\[
\RegVf \subseteq \VV(f) \cap X_{\reg}.
\]
We also introduce the set
\[
\Rf = \Df \cap \RegVf
\]
which is invariant under the action of $\Gf{G}{0}$ (see Corollary \ref{rf_inv}).

Now, with all these definitions in mind, let us formulate the main result of this section. For the sake of simplicity, we will divide it into two parts - the case of $2$-suspension and the case of $m$-suspension. The general case has little difference with the special one so we will leave it without proof.

In the following theorem, $\Affine^1_u$ and $\Affine^1_v$ denote affine lines with coordinates $u$ and $v$, respectively.

\begin{suspension-flexible}\label{orbit_theorem}
Let $X$ be a flexible affine algebraic variety, and let $f$ be a local slice of $X$. Consider the $2$-suspension
\[
Y = \VV(uv^k - f) \subset X \times \Affine^1_u \times \Affine^1_v
\]
with the weight $k > 1$. Then the group $G$ acts on $Y_{\reg}$ with an open orbit $\mathcal{O}_G$, whose complement is
\[
Y_{\reg} \setminus \mathcal{O}_G = \Rf \times \Affine^1_u \times \{0\}.
\]
Moreover, this complement decomposes into a disjoint union of $G$-orbits in the following way
\[
\bigsqcup_{[p] \, \in \, \Rf / \Gf{G}{0}} \Gf{G}{0} \cdot p \times \Affine^1_u \times \{0\}.
\]
\end{suspension-flexible}

Before giving the formal proof in Subsection \ref{proof_theorem_orbits}, let us explain the underlying idea and the main difficulty.

The variety $Y$ admits a rich family of derivations of the form $\hat{\delta}_u(\delta, q)$, which are locally nilpotent for every $\delta \in \LND(Y)$. In contrast, derivations of the form $\hat{\delta}_v(\delta, q)$ are much more restrictive: they are locally nilpotent only when $\delta \in \Gf{N}{1}$, apart from the derivations $\hat{\delta}(\delta, q)$ with $\delta \in \Gf{N}{0}$. Moreover, at points of the form $(x, u, 0) \in Y$ with $x \in \Df$, the derivations $\hat{\delta}_v(\delta, q)$ degenerate, in the sense that both coefficients in the linear combination
\[
k u v^{k - 1} \delta^{'} + \delta^{'}(f) \partial_v
\]
vanish simultaneously. For these two reasons, the subset
\[
\Rf \times \Affine^1_u \times \{0\}
\]
remains $G$-invariant subset of $X$. Nevertheless, the relatively small collection of locally nilpotent derivations of type $\hat{\delta}_v(\delta, q)$ (in some cases, just a single one, up to taking replicas) is sufficient to ensure the transitivity of the $G$-action outside this subset.

\begin{remark}
Theorem \ref{orbit_theorem} admits the following generalization. Consider
\[
Y = \VV(u_1 \dots, u_m v_1^{k_1} \dots v_l^{k_l} - f(x)),
\]
where $m, l \geq 1$ and $k_i > 1$ for $i = 1, \dots, l$. Then the complement of the open $G$-orbit $\mathcal{O}_G$ is
\[
Y_{\reg} \setminus \mathcal{O}_G = \Rf \times \Affine_u^m \times \VV(v_1 \dots v_l) \subset X \times \Affine_u^m \times \Affine_v^l.
\]
It decomposes into a disjoint union of $G$-orbits in the following way
\[
\bigsqcup_{\substack{[p] \, \in \, \Rf / \Gf{G}{0}, \\ q \, \in \, \VV(v_1 \dots v_l)}} \Gf{G}{0} \cdot p \times \Affine^m_u \times \{q\}.
\]
\end{remark}

\subsection{Corollaries and examples}

\begin{example}\label{example_div}
    Let $X$ be the flexible Danielewski surface
    \[
    X = \VV(xy - z^2 - 1) \subset \Affine^3,
    \]
    and let $Y$ be the $2$-suspension over $X$
    \[
    Y = \VV(uv^k - yz) \subset X \times \Affine^2
    \]
    with $k > 1$. Then the complement of the open $G$-orbit on $Y$
    is a divisor consisting of $G$-fixed points.
\end{example}

\begin{proof}
    Let us determine first for which LNDs polynomial $yz$ is a local slice. If $\deg_{\delta} (yz) = 1$, then
    \[
    \deg_{\delta}y + \deg_{\delta}z = 1, \;\;\;\; \deg_{\delta}x + \deg_{\delta}y = 2 \deg_{\delta}z,
    \]
    which yields the unique solution
    \[
    \deg_{\delta}x = 1, \, \deg_{\delta}y = 0, \, \deg_{\delta}z = 1.
    \]
    Every LND with such degrees is a replica of $\delta_0$, where
    \[
    \delta_0(x) = 2z, \;\; \delta_0(z) = y, \;\; \delta_0(y) = 0.
    \]
    Hence,
    \[
    \mathcal{D}_{yz} = \VV(yz, \delta_0(yz)) = \VV(yz, y^2) = \VV(y).
    \]

    Let us write down the Jacobian matrix $J$ of $\VV(yz) \subset X$:
    \[
    J = \begin{pmatrix}
        y & x & -2z\\
        0 & z & y
    \end{pmatrix}.
    \]
    If $y = 0$, then the defining equation of $X$ implies that $z \neq 0$. Therefore $\rk J = 2$ and consequently
    \[
    \mathcal{R}_{yz} = \mathcal{D}_{yz} \cap \operatorname{Reg}_X \VV(yz) = \VV(y) \cap \VV(yz) = \VV(y).
    \]

    Thus, the complement of the open $G$-orbit on the suspension is a divisor
    \[
    Y \setminus \mathcal{O}_G = \VV(y) \times \Affine \times \{0\}.
    \]
    Moreover, all $G$-orbits on this divisor are trivial. Indeed, it is easy to verify that $\mathcal{N}^{(0)}_{yz} = \{0\}$. That means $\Gf{G}{0}$ act trivially on $\mathcal{R}_{yz}$.
\end{proof}

\begin{corollary}
    Let $X$ be a flexible variety and let $f$ be a local slice of $X$. Each of the following conditions is sufficient for the flexibility of
    \[
    Y = \VV(u_1 \dots u_m v_1^{k_1} \dots v_l^{k_l} - f(x)) \subset X \times \Affine_u^m \times \Affine_v^l.
    \]
    \begin{enumerate}
        \item $\Rf = \emptyset$.
        \item $f$ is a slice of $X$.
        \item $f = h g \delta_0 g + c$ for some $c \in \KK \setminus \{0\}$ and $\delta_0 \in \LND(X)$, $h, g \in \KKX$ satisfying $\deg_{\delta_0} g = 1$ and $h \in \Ker \delta_0$. Moreover, one may take $f = h g p + c$, where $p$ satisfies $\exists \, n : p^n = \delta_0 g$.
    \end{enumerate}
\end{corollary}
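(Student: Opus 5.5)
The plan is to reduce all three conditions to condition (1) and then read off flexibility from the $G$-orbit decomposition theorem in its general form (the Remark following it). That result describes $Y_{\reg}\setminus\mathcal{O}_G$ as
\[
\Rf \times \Affine_u^m \times \VV(v_1 \dots v_l).
\]
If $\Rf=\emptyset$, this complement is empty. Then $G$ acts transitively on $Y_{\reg}$, and since $G\subseteq \SAut(Y)$, so does $\SAut(Y)$. By Proposition \ref{saut_action}, every smooth point is then flexible, so $Y$ is flexible. This settles (1). Note that $Y$ is generically flexible in any case, since $\mathcal{O}_G$ is open.

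For (2), I use the Proposition after the definition of $\Df$: if $f$ is a slice, then $\Df=\emptyset$. Hence $\Rf=\Df\cap\RegVf=\emptyset$, and (1) applies.

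For (3), the goal is again $\Rf=\emptyset$, and in fact I will show $\Df=\emptyset$. The strategy is to produce an element of $\Gf{N}{1}$ whose derivative of $f$, together with $f$ itself, generates the unit ideal.

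\emph{Choice of derivation.} Put $\delta=g\,\delta_0$ and $a=\delta_0 g\in\Ker\delta_0$, which is nonzero since $\deg_{\delta_0}g=1$. The factor $g$ is not in $\Ker\delta_0$, so $\delta$ is not a replica, and I must check directly that $\delta$ is locally nilpotent. The easier route is to replace $\delta$ by a replica of $\delta_0$ itself. Compute
\[
\delta_0 f=h\,\delta_0(g\cdot a)=h a^2,\qquad \delta_0^2 f=0.
\]
So $\deg_{\delta_0}f=1$ whenever $h\neq0$, and $\delta_0\in\Gf{N}{1}$. Consequently $\If\ni f,\ ha^2$.

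\emph{Showing the ideal is the unit ideal.} On $\VV(ha^2)$ we have $h=0$ or $a=0$, and in either case $f=hga+c=c\neq0$. Hence
\[
\VV(f,ha^2)=\emptyset,
\]
so $\Df=\emptyset$ and $\Rf=\emptyset$.

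\emph{Variant with $p$.} For $f=hgp+c$ with $p^n=a$, factorial closedness of $\Ker\delta_0$ gives $p\in\Ker\delta_0$. Then
\[
\delta_0 f=hpa,\qquad \delta_0^2 f=0.
\]
On $\VV(hpa)$ we get $p=0$ or $h=0$, using that $a=0$ forces $p=0$. Either way $f=c\neq0$, so again $\Df=\emptyset$.

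The main obstacle is not these computations but the correct use of the general orbit theorem. It is stated for $m,l\ge1$ and is proved in the paper only for $m=l=1$. I must also take care with degenerate cases. If $h=0$, then $f$ is constant, which the definition of a suspension excludes. If $l=0$, all weights equal $1$, and Theorem \ref{flex_1} together with the iteration remark covers that case instead.
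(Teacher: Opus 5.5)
Your proposal is correct and follows the paper's proof essentially step for step: (1) is read off from the orbit theorem, (2) comes from $\Df=\emptyset$, and (3) comes from placing $\delta_0$ in $\Gf{N}{1}$ and showing $\VV(f,\delta_0 f)=\emptyset$. Your pointwise observation that $hp$ (or $ha$) vanishes on $\VV(\delta_0 f)$, forcing $f=c\neq 0$ there, is just the Nullstellensatz form of the paper's remark that $hp\in\sqrt{\If}$ and hence $c=(hgp+c)-g(hp)\in\sqrt{\If}$.
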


\begin{proof}
    \begin{enumerate}
        \item This is an immediate consequence of the theorem.
        \item If $f$ is a slice of $X$, then $\Df = \Rf = \emptyset$.
        \item Let $f = h g p + c$, where $h, g, p,$ and $c$ are as in the statement. Since $p^n = \delta_0 g \in \Ker \, \delta_0$, factorial closedness of the kernel of an LND implies that $p \in \Ker \, \delta_0$. It is straightforward to verify that under this assumption $f$ is a local slice of $X$.
        
        Let us compute the radical of the ideal $\If$:
        \[
        \sqrt{\If} \supseteq \sqrt{(f, \delta_0 f)} = \sqrt{(h g p + c, h p \delta_0 g)} \ni hp.
        \]
        Since the radical contains $hp$, it also contains the constant
        \[
        (hgp + c) - g(hp) = c.
        \]
        Therefore, $\Df = \emptyset$.
    \end{enumerate}
\end{proof}

\begin{remark}
    When $f$ is a slice, the flexibility of $Y$ can also be proved by an alternative argument. By the Slice Theorem,
    \[
    X \cong Z \times \Affine_f,
    \]
    where $f$ is the coordinate on the affine line $\Affine_f$. Hence,
    \[
    \Susp(Z \times \Affine_f, f, 1, k) \cong Z \times \Affine^2 \cong X \times \Affine.
    \]
    If $X$ is flexible, then so is $X \times \Affine$. \emph{Thus, the theorem on $G$-orbits yields a genuinely new result only when $f$ is a local slice but not a slice.}
\end{remark}

The last part of the corollary is useful because it provides a simple and constructive way to produce examples of flexible suspensions. This is well illustrated by the following example.

\begin{example}
    Let $X$ be defined by
    \[
    X = \VV(xy - z^n - f_{n - 1}(y) z^{n - 1} - \dots - f_1(y) z - f_0(y)) \subset \Affine^3
    \]
    for arbitrary polynomials $f_0(y), \dots, f_{n - 1}(y) \in \KK[y]$ and $n \ge 1$. It is well known that such $X$ is a flexible Danielewski surface. Consider the $(m + l)$-suspension
    \[
    Y = \VV(X, u_1 \dots u_m v_1^{k_1} \dots v_l^{k_l} - zyf(y) - c).
    \]
    Then it is flexible for any $f(y) \in \KK[y]$, $c \in \KK \setminus \{0\}$, $m, l \ge 1$, and $k_j > 1$.
\end{example}

\begin{proof}
    This follows from the last part of the corollary by considering the derivation
    \[
    \delta_0(x) = \frac{\partial}{\partial z} \left(z^n + \sum_{i = 0}^{n - 1} f_i(y) z^i\right), \;\; \delta_0(z) = y, \;\; \delta_0(y) = 0
    \]
    with $h = f(y)$ and $g = z$.
\end{proof}

Since the construction of $f = h g \delta_0 g + c$ only requires choosing a nontrivial $\delta_0 \in \LND(X)$ together with an arbitrary local slice $g$ of $\delta_0$ and an arbitrary element $h \in \Ker \, \delta_0$, we obtain the following proposition.

\begin{proposition}
    Let $X$ be a flexible affine algebraic variety. Then there exists an element $f \in \KKX \setminus \KK$ such that $\Susp(X, f, 1, k_2, \dots, k_m)$ is flexible for any $k_i > 1$, $i = 2, \dots, m$.
\end{proposition}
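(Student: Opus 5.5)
We reduce the proposition to part (3) of the preceding corollary, applied with $m = 1$ and $l = m-1$, i.e.\ to $Y = \VV(u_1 v_1^{k_2}\cdots v_{m-1}^{k_m} - f)$. Three things are needed: a nonzero $\delta_0 \in \LND(X)$, a local slice $g$ of $\delta_0$, and an element $h \in \Ker\,\delta_0$.

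\textbf{Choice of data.} The variety $X$ is flexible, and we may assume $\dim X \geq 1$, since otherwise $\KKX = \KK$. Then $\SAut(X)$ acts on $X_{\reg}$ with an open orbit, so $\LND(X) \neq \{0\}$; fix a nonzero $\delta_0$. As remarked after the definition of a local slice, $\delta_0$ has a local slice $g$, obtained by applying $\delta_0$ to any element outside $\Ker\,\delta_0$ exactly $\deg_{\delta_0}$ minus one times. Take $h = 1$, which lies in $\Ker\,\delta_0$, and set
\[
f = g\,\delta_0 g + c, \qquad c \in \KK \setminus \{0\}.
\]

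\textbf{Verification.} Since $\delta_0 g \in \Ker\,\delta_0$, we get $\delta_0 f = (\delta_0 g)^2 \neq 0$ and $\delta_0^2 f = 0$. So $f$ is a local slice, and in particular $f \notin \KK$. Part (3) of the corollary, whose radical computation shows $\Df = \emptyset$ and hence $\Rf = \emptyset$, then gives flexibility of $\Susp(X, f, 1, k_2, \dots, k_m)$ for every choice of $k_i > 1$. The case where all $k_i = 1$ for $i \geq 2$ is not required by the statement, and would in any case follow from Theorem \ref{flex_1}.

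\textbf{Main obstacle.} Everything above is routine; the real content is the $G$-orbit decomposition theorem in its general $(u_1,\dots,u_m,v_1,\dots,v_l)$ form from the remark, which we take as given. The one point to watch is the hypothesis that $Y$ is irreducible, which is assumed throughout the paper. With $k_1 = 1$ this holds automatically: the equation $u_1 v^{\kappa} = f$ is linear in $u_1$, and $f$ does not vanish identically, so $Y$ is irreducible.
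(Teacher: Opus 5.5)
Your proposal is correct and follows the paper's own argument: the paper obtains the proposition directly from part (3) of the preceding corollary, noting that one only needs a nonzero $\delta_0 \in \LND(X)$, a local slice $g$ of $\delta_0$, and an element $h \in \Ker\,\delta_0$, and your choice $h = 1$, $p = \delta_0 g$ is just a specialization of that construction. One small correction: when $\dim X = 0$ the statement is false rather than vacuous, since no $f \in \KKX \setminus \KK$ exists, so ``we may assume $\dim X \geq 1$'' should read as an implicit hypothesis, which the paper also leaves unstated.
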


\subsection{Auxiliary lemmas}

Henceforth, let $X$ be a flexible affine algebraic variety, $f$ an arbitrary local slice on $X$, and $Y = \Susp(X, f, 1, k)$ an irreducible $2$-suspension with the weight $k > 1$.

We formulate several lemmas based on the lemmas from \cite{Arzhantsev-suspensions}, which were used to prove the flexibility of $\Susp(X, f, 1, 1)$. It should be noted that the statements and proofs of these lemmas undergo substantial simplification. This is due to the transition from proving the infinite transitivity of the $\SAut$-action on $\Susp(X, f, 1, 1)$ to our setting, where our only goal is to determine the $G$-orbits. We also note that at the time \cite{Arzhantsev-suspensions} was written, the equivalence between transitivity and infinite transitivity of the $\SAut$-action on varieties had not yet been proved (although it was conjectured to hold). This equivalence was established later in \cite{Arzhantsev-flexible}.

\begin{lemma}\label{lemma_reg}
Let $\pi: Y \rightarrow X$ denote the restriction of the projection $X \times \Affine^2 \rightarrow X$ to $Y$. Then $\pi(Y_{\reg}) = X_{\reg}$.
\end{lemma}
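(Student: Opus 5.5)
The statement is the two inclusions $\pi(Y_{\reg}) \subseteq X_{\reg}$ and $X_{\reg} \subseteq \pi(Y_{\reg})$. Both come from comparing the Jacobian matrices $J_X$ and $J_Y$, as in the definition of $\RegY$ above; flexibility of $X$ is not needed.

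Fix an embedding $X \subseteq \Affine^n$ with ideal generated by $g_1, \dots, g_t$, and set $r = n - \dim X$. Then $Y \subseteq \Affine^{n} \times \Affine^2_{u,v}$ is cut out by $g_1, \dots, g_t$ and $uv^k - f$. Here $f$ is lifted to a polynomial in $x$. We use that $\KK[Y] = \KK[X][u,v]/(uv^k - f)$, as throughout the paper, and that $Y$ is irreducible. Moreover $uv^k - f$ is not identically zero on the irreducible variety $X \times \Affine^2$. Hence $Y$ is a hypersurface in it and $\dim Y = \dim X + 1$, so $\operatorname{codim} Y = r + 1$ in $\Affine^{n+2}$. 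At a point $p = (x,u,v)$ the matrix $J_Y(p)$ is $J_X(x)$ with two zero columns appended (for $u$ and $v$), together with the extra row
\[
\bigl(-df(x),\; v^k,\; k u v^{k-1}\bigr).
\]
By the Jacobian criterion, $p \in Y_{\reg}$ if and only if $\rk J_Y(p) = r+1$, and $x \in X_{\reg}$ if and only if $\rk J_X(x) = r$.

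For the inclusion $\pi(Y_{\reg}) \subseteq X_{\reg}$, let $p = (x,u,v) \in Y_{\reg}$. Then $r + 1 = \rk J_Y(p) \le \rk J_X(x) + 1$, so $\rk J_X(x) \ge r$. Since $r$ is the maximal possible rank of $J_X$ on $X$, we get equality, and $x \in X_{\reg}$.

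For the reverse inclusion, given $x \in X_{\reg}$ we exhibit an explicit smooth point of $Y$ over it, namely $p = (x, f(x), 1)$. It lies on $Y$ because $f(x)\cdot 1^k - f(x) = 0$; this covers both $f(x) \neq 0$ and $f(x) = 0$. At $p$ the extra row has entry $v^k = 1$ in the $u$-column. All rows coming from $J_X(x)$ vanish in that column, so the extra row is linearly independent of them. Hence $\rk J_Y(p) = \rk J_X(x) + 1 = r + 1$, so $p \in Y_{\reg}$ and $x = \pi(p)$.

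The only delicate point is the Jacobian criterion for $Y$. It requires that $g_1, \dots, g_t, uv^k - f$ generate the full (radical) ideal of $Y$, and that $\dim Y = \dim X + 1$. Both follow from the paper's standing convention that $\KK[Y]$ is the quotient $\KK[X][u,v]/(uv^k-f)$ and that this quotient is an integral domain. I would state these two facts explicitly at the start of the proof; the rest is the rank count above.
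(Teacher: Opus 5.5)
Your proof is correct and is essentially the paper's argument. The forward inclusion follows from $\rk J_Y \le \rk J_X + 1$ together with $\dim Y = \dim X + 1$. The reverse inclusion lifts $x$ to a point with $v \neq 0$ (you take $v = 1$, $u = f(x)$), where the nonzero entry $v^k$ in the $u$-column raises the rank by one; your explicit remark that this needs $\KK[X][u,v]/(uv^k - f)$ to be the reduced coordinate ring of $Y$ states a standing assumption the paper only uses implicitly.
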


\begin{proof}
Let $f_1, \dots, f_r \in \KK[x_1, \dots, x_s]$ be functions generating the ideal of $X$. A point $P \in X$ is regular only if the rank of the Jacobian matrix
\[
J_X = \begin{pmatrix}
\frac{\partial f_1}{\partial x_1} & \frac{\partial f_1}{\partial x_2} & \dots & \frac{\partial f_1}{\partial x_s}\\
\frac{\partial f_2}{\partial x_1} & \frac{\partial f_2}{\partial x_2} & \dots & \frac{\partial f_2}{\partial x_s}\\
\dots & \dots & \dots & \dots\\
\frac{\partial f_r}{\partial x_1} & \frac{\partial f_r}{\partial x_2} & \dots & \frac{\partial f_r}{\partial x_s}
\end{pmatrix}
\]
attains its maximal value $s - \dim \, X$ at this point.

The Jacobian matrix of $Y$ is
\[
J_Y = \begin{pmatrix}
\frac{\partial f_1}{\partial x_1} & \frac{\partial f_1}{\partial x_2} & \dots & \frac{\partial f_1}{\partial x_s} & 0 & 0\\
\frac{\partial f_2}{\partial x_1} & \frac{\partial f_2}{\partial x_2} & \dots & \frac{\partial f_2}{\partial x_s} & 0 & 0\\
\dots & \dots & \dots & \dots & 0 & 0\\
\frac{\partial f_r}{\partial x_1} & \frac{\partial f_r}{\partial x_2} & \dots & \frac{\partial f_r}{\partial x_s} & 0 & 0\\
\frac{\partial f}{\partial x_1} & \frac{\partial f}{\partial x_2} & \dots & \frac{\partial f}{\partial x_s} & -v^k & -kuv^{k - 1}
\end{pmatrix}.
\]

Since $\rk \, J_Y \leq \rk \, J_X + 1$ and $\dim Y = \dim X + 1$, we have $\pi(Y_{\reg}) \subseteq X_{\reg}$. On the other hand, the reverse inclusion also holds. For an arbitrary point $P \in X_{\reg}$, we have $\rk \, J_X(P) = s - \dim \, X$. There exists a square submatrix $M(P)$ of $J_X(P)$ whose rank and order are both equal to $s - \dim \, X$. Extend this matrix by adding the last row and the penultimate column of $J_Y$ for some $v \neq 0$. The resulting matrix $M^{'}(P)$ has rank greater by one at the point $P^{'} = (P, u, v)$, where $u = f(P)/v^k$. Hence $P^{'} \in Y_{\reg}$ and $\pi(P^{'}) = P$.
\end{proof}

\begin{corollary}\label{f_reg}
\begin{enumerate}
\item $(P, u, 0) \in Y_{\reg} \iff P \in \RegVf$.
\item Let $v \neq 0$. Then $(P, 0, v) \in Y_{\reg} \iff P \in \VV(f) \cap X_{\reg}$.
\end{enumerate}
\end{corollary}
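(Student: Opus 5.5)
The plan is to read both statements off the Jacobian matrix $J_Y$ written in the proof of Lemma \ref{lemma_reg}, evaluated at the given point $P^{'}=(P,u,v)\in Y$. Since $\dim Y=\dim X+1$, we have $(P,u,v)\in Y_{\reg}$ if and only if $\rk J_Y(P^{'}) = s-\dim X+1 = r+1$, where $r=s-\dim X$ is the maximal rank of $J_X$. The upper $r$ rows of $J_Y$ are $J_X(P)$ padded with two zero columns. The last row is $(\nabla f(P),\,-v^k,\,-kuv^{k-1})$.

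For part (1) we set $v=0$. Because $k>1$, both $-v^k$ and $-kuv^{k-1}$ vanish, so the last two columns of $J_Y(P,u,0)$ are zero. Thus $J_Y(P,u,0)$ is, up to deleting zero columns, exactly the matrix $J_{\VV(f)}(P)$ obtained from $J_X(P)$ by adjoining the row $\nabla f(P)$. The point $P$ lies in $\VV(f)$ because $f(P)=u\cdot 0^k=0$, and $\VV(f)$ has codimension one in $X$. Hence $\rk J_Y(P,u,0)=r+1$ holds exactly when $\rk J_{\VV(f)}(P)=r+\codim_X\VV(f)$, which is the definition of $P\in\RegVf$. This argument works in both directions at once. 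I should check that $\VV(f)$ is a genuine divisor, i.e.\ $f$ is nonconstant and not a unit. Since $f$ is a local slice, it is not constant, and the case $\VV(f)=\emptyset$ makes both sides vacuous.

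For part (2) we take $v\neq 0$ and $u=0$, so $f(P)=0$ and $P\in\VV(f)$. The last row now has entry $-v^k\neq 0$ in the $u$-column, while the upper rows vanish in that column. Therefore
\[
\rk J_Y(P,0,v)=\rk J_X(P)+1 .
\]
This rank equals $r+1$ if and only if $\rk J_X(P)=r$, that is, if and only if $P\in X_{\reg}$.

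One subtle point needs care in both parts: the Jacobian criterion for $Y$ requires the equations $f_1,\dots,f_r,\,uv^k-f$ to generate the ideal of $Y$, or at least its radical. I would handle this as the proof of Lemma \ref{lemma_reg} implicitly does. We use the standing irreducibility assumption on $Y$ and note that the rank condition on these equations already characterizes $Y_{\reg}$, since their rank can never exceed $r+1$. I expect this justification to be the only real obstacle; the rest is a direct inspection of $J_Y$.
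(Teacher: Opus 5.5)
Your proposal is correct and follows the paper's route. Both parts are read off the matrix $J_Y$ from the proof of Lemma \ref{lemma_reg}. In part (1), your observation that for $k>1$ the last two columns vanish at $v=0$ justifies $\rk J_Y(P,u,0)=\rk J_{\VV(f)}(P)$ more cleanly than the paper's remark that $J_Y(P')$ does not depend on $u$. Part (2) is exactly the $u$-column argument of the lemma.

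On the subtle point you flag, the bound $\rk J_Y\le r+1$ only gives ``full rank $\Rightarrow$ smooth.'' The converse needs the ideal generated by $f_1,\dots,f_r$ and $uv^k-f$ to be radical. This holds because the paper takes $\KK[X][u,v]/(uv^k-f)$ to be a domain; here that is automatic, since $v$ is a non-zero-divisor and inverting it gives $\KK[X][v,v^{-1}]$.
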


\begin{proof}
In both statements, the inclusion $\pi(P^{'}) = P \in \VV(f)$ follows immediately from the suspension equation: $(uv^k - f)(P^{'}) = -f(P) = 0$. The proof of the second statement is analogous to the proof of the lemma, so we focus only on the first one.

At the point $P^{'} = (P, u, 0) \in Y_{\reg}$ the rank $\rk \, J_Y(P^{'}) = s + 1 - \dim X = s - \dim \VV(f)$. Since $J_Y(P^{'})$ does not depend on $u$ in any way, we have $\rk \, J_Y(P^{'}) = \rk \, J_{\VV(f)}(P)$, and the relation for the rank implies that $P \in \RegVf$.
\end{proof}

For an arbitrary $c \in \KK$ define the hypersurfaces $U_c = \{u = c\}$ and $V_c = \{v = c\}$.

\begin{lemma}\label{Vc}
Suppose that $\SAut(X)$ acts transitively on $X_{\reg}$. Then for any $c \in \KK \setminus {0}$ the group $G_u$ acts transitively on $V_c \cap Y_{\reg}$.
\end{lemma}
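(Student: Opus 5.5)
Fix $c \neq 0$. On $V_c$ the suspension equation reads $u c^k = f(x)$, so the projection $\pi: Y \to X$ restricts to an isomorphism $V_c \cong X$, given by $x \mapsto (x, f(x)/c^k, c)$. Moreover, by the argument of Lemma \ref{lemma_reg} (using the column of $J_Y$ with entry $-v^k \neq 0$), a point $(P, u, c)$ lies in $Y_{\reg}$ if and only if $P \in X_{\reg}$. Hence $V_c \cap Y_{\reg}$ is identified with $X_{\reg}$. The plan is to show that every $\Ga$-subgroup of $X$ lifts, via a suitable $\hat{\delta}_u$, to a $\Ga$-subgroup of $G_u$ that preserves $V_c$ and covers the original action. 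Transitivity of $\SAut(X)$ on $X_{\reg}$ then transfers to transitivity of $G_u$ on $V_c \cap Y_{\reg}$.

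For $\delta \in \LND(X)$ with $\delta(f) \neq 0$, take $q = 1$ (note that $1 \in (\Ker\,\delta')_u$). Then
\[
\hat{\delta}_u(\delta,1) = v^k \delta' + \delta'(f)\,\partial_u,
\]
which is locally nilpotent by part (1) of Lemma \ref{lnd_criteria}, since the weight of $u$ is $1$. It kills $v$, so its flow preserves each $V_c$. On $V_c$ it acts on the $X$-coordinates as $c^k\delta$, and the $u$-coordinate is forced by the equation $u = f/c^k$. Therefore $\exp(s\hat{\delta}_u)$ restricted to $V_c$ corresponds, under $V_c \cong X$, to $\exp(s c^k \delta)$. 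For $\delta$ with $\delta(f) = 0$, I use $\hat{\delta}(\delta, 1) = \delta'$, which also kills $v$ and $u$ and restricts to $\exp(s\delta)$. I need to confirm that these derivations count as generators of $G_u$; if the definition of $G_{y_i}$ is read strictly as only the $\hat{\delta}_i$-type, I would instead handle $\delta(f) = 0$ through the replica-type derivation $v^k\delta'$, which coincides with $\hat{\delta}_u(\delta,1)$ when $\delta'(f) = 0$. In either case every one-parameter unipotent subgroup $\exp(\KK\delta)$ of $X$ is realized on $V_c$ by an element of $G_u$.

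Since $\SAut(X)$ is generated by the groups $\exp(\KK\delta)$, $\delta \in \LND(X)$, each element of $\SAut(X)$ is the restriction to $V_c \cong X$ of an element of $G_u$ preserving $V_c$. By hypothesis $\SAut(X)$ is transitive on $X_{\reg}$, so $G_u$ is transitive on $V_c \cap Y_{\reg}$. The main point to check carefully is the correspondence of flows. Because $\hat{\delta}_u(v) = 0$, the derivation $v^k$ is a constant $c^k$ on $V_c$, so $\exp(s\hat{\delta}_u)(x_j)$ restricted to $V_c$ equals $\exp(sc^k\delta)(x_j)$ by expanding the exponential. This uses only that $v$ lies in the kernel and that $\hat{\delta}_u^n(x_j) = v^{kn}\delta^n(x_j)$.
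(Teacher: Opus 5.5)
Your proposal is correct and follows essentially the same route as the paper: identify $V_c \cap Y_{\reg}$ with $X_{\reg}$ via $\pi$, write $\psi\in\SAut(X)$ as a product of exponentials, and lift each factor through $\hat{\delta}_u(\delta^{(i)},q)$ (or $\delta'$ when $\delta^{(i)}(f)=0$), which kills $v$ and hence preserves $V_c$. The only cosmetic difference is that the paper takes $q=c^{-k}$ when $\delta^{(i)}(f)\neq 0$, so the restriction is exactly $\delta^{(i)}$, whereas you take $q=1$ and absorb the factor $c^k$ by reparametrizing the $\Ga$-subgroup.
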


\begin{proof}
Consider arbitrary points $P^{'}$ and $Q^{'}$ in $V_c \cap Y_{\reg}$. The map $\pi$ sends $V_c$ isomorphically onto $X$, and $\pi(V_c \cap Y_{\reg}) \subset X_{\reg}$. The point $P^{'}$ can be written as $(P, u, v)$, where $P = \pi(P^{'}), u = u(P^{'}) = f(P) / c^k$. Accordingly, every regular point of $X$ has a preimage in $Y_{\reg}$. Thus, $\pi$ induces an isomorphism $X \cong V_c$, sending the point $P$ to $P^{'}$.

By the transitivity of the $\SAut(X)$-action, there exists an automorphism $\psi \in \SAut(X)$ sending $P$ to $Q$. It can be written as
\[
\psi = \prod_{i = 1}^r \exp(\delta^{(i)}),
\]
for some LNDs $\delta^{(i)}$. Let $\hat{\delta}^{(i)}_u = \hat{\delta}_u(\delta^{(i)}, \alpha_i)$ be their extensions, with suitable constants $\alpha_i$, so that the restrictions of these LNDs to $X$ coincide with $\delta^{(i)}$. Namely, if $\delta^{(i)} (f) \neq 0$, then we take $\alpha_i = c^{-k}$; otherwise, we take $\alpha_i = 1$. Then $\psi$ lifts to the automorphism
\[
\hat{\psi} = \prod_{i = 1}^r \exp(\hat{\delta}^{(i)}_u) \in G_u \subset \SAut(Y),
\]
which sends $P^{'}$ to $Q^{'}$.
\end{proof}

\begin{corollary}\label{out_U0}
Suppose that $\SAut(X)$ acts transitively on $X_{\reg}$. Fix $c \in \KK \setminus {0}$. Then for every point $P^{'} \in U_0 \cap V_c \cap Y_{\reg}$ there exists an automorphism $\psi \in \SAut(Y)$ such that $\psi(P^{'}) \in (V_c \cap Y_{\reg}) \setminus U_0$.
\end{corollary}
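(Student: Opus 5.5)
The idea is to move $P' = (P,0,c)$ off the hyperplane $U_0$ while staying inside the slice $V_c \cap Y_{\reg}$, using the group $G_u$ from Lemma \ref{Vc}. Since $P'\in U_0\cap V_c$, the suspension equation $uv^k = f$ gives $f(P) = 0$. On $V_c$ we have $u = f/c^k$, so $P'\notin U_0$ exactly when $f$ does not vanish at the $X$-component. By Lemma \ref{Vc}, the projection $\pi$ identifies $V_c\cap Y_{\reg}$ with $X_{\reg}$, and $G_u$ acts on it transitively. Therefore it suffices to find a regular point $Q\in X_{\reg}$ with $f(Q)\neq 0$. We then take $Q' = (Q, f(Q)/c^k, c)\in V_c\cap Y_{\reg}$ and apply Lemma \ref{Vc} to obtain $\psi\in G_u\subseteq \SAut(Y)$ with $\psi(P') = Q'$.

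To find such a $Q$, note that $f\in \KKX\setminus\KK$. Hence $\VV(f)$ is a proper closed subset of the irreducible variety $X$. Since $X_{\reg}$ is open and nonempty, $X_{\reg}\setminus \VV(f)$ is nonempty. Any point $Q$ in this set works, and $Q'$ lies in $V_c\cap Y_{\reg}$ by the preimage construction in the proof of Lemma \ref{lemma_reg}: with $v=c\neq 0$, the rank of $J_Y$ increases by one.

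We can also write down an explicit flow, to avoid relying on the full transitivity. Since $f$ is a local slice, there is $\delta_f$ with $\delta_f(f)\neq 0$ and $\delta_f^2(f)=0$. Along $\exp(s\delta_f)$ we have $f(\exp(s\delta_f)P) = f(P) + s\,\delta_f(f)(P)$. If $\delta_f(f)(P)\neq 0$, the lift $\exp(s\hat\delta_u(\delta_f,c^{-k}))$ moves $P'$ off $U_0$ for every $s\neq 0$. In general, however, $P$ may lie in $\VV(\delta_f f)$, so the transitivity argument above is the one to use.

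The main point to check is that the points involved really lie in $Y_{\reg}$. This is settled by Lemma \ref{Vc}, whose proof shows that $V_c\cap Y_{\reg}$ corresponds bijectively to $X_{\reg}$ under $\pi$. A second point is that the lifts $\hat\delta_u(\delta,\alpha)$ are locally nilpotent for every $\delta$. This holds because the weight of $u$ is $1$ (Lemma \ref{lnd_criteria}, part 1), and it is what makes the lifted automorphism lie in $G_u$.
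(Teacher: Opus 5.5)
Your proof is correct and follows the paper's argument. You invoke the transitivity of $G_u$ on $V_c \cap Y_{\reg}$ from Lemma \ref{Vc}, then show $(V_c\cap Y_{\reg})\setminus U_0\neq\emptyset$ by lifting a point $Q\in X_{\reg}\setminus\VV(f)$ to $(Q, f(Q)/c^k, c)$ and checking the Jacobian rank. You also justify the nonemptiness of $X_{\reg}\setminus\VV(f)$ via irreducibility and $f\notin\KK$, which the paper leaves implicit. Your explicit-flow aside is correct but not needed.
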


\begin{proof}
The group $G_u$ acts transitively on $V_c \cap Y_{\reg}$, so there exists an automorphism $\psi \in G_u$ sending $P^{'}$ to a point $\psi(P^{'}) \in (V_c \cap Y_{\reg}) \setminus U_0$. The latter set is nonempty. Indeed, to an arbitrary regular point $P \in X_{\reg} \setminus \VV(f)$ we associate the point $P^{'} = (P, c, u) \in Y$ with coordinate $u = \frac{f(P)}{c^k} \neq 0$. For this point $P^{'}$, the rank of the Jacobian matrix $J_Y(P^{'})$ is maximal and equal to $s + 1 - \dim X$. Hence $P^{'} \in (V_c \cap Y_{\reg}) \setminus U_0$.
\end{proof}

The proof of Lemma \ref{Vc} yields the following useful observation about $\RegVf$.

\begin{corollary}
$\RegVf$ is invariant under the action of $\Gf{G}{0}$ on $\VV(f)$.
\end{corollary}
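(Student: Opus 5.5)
The plan is to reduce the statement to the lifting mechanism already used in Lemma \ref{Vc} and Corollary \ref{f_reg}. Fix $\delta \in \Gf{N}{0}$, so $\delta(f) = 0$. By Definition \ref{lnd_deff}, $\delta$ extends to $\hat{\delta}(\delta, q) = q\delta^{'}$ on $X \times \Affine^2$, and taking $q = 1$ this descends to an LND on $Y$. Its flow is
\[
\exp(s\hat{\delta})(P, u, v) = (\exp(s\delta)(P), u, v),
\]
because $\delta^{'}(u) = \delta^{'}(v) = 0$. The equation $uv^k = f$ is preserved along the flow, since $f$ is $\exp(s\delta)$-invariant. In particular, on the slice $v = 0$ the flow is just the $\exp(s\delta)$-action on the first coordinate, restricted to points with $f(P) = 0$.

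Next I would use that automorphisms of $Y$ preserve $Y_{\reg}$. Take $P \in \RegVf$. By Corollary \ref{f_reg}(1), the point $(P, 0, 0)$ lies in $Y_{\reg}$. Its image $(\exp(s\delta)(P), 0, 0)$ under an automorphism of $Y$ is again in $Y_{\reg}$. Applying Corollary \ref{f_reg}(1) in the opposite direction gives $\exp(s\delta)(P) \in \RegVf$.

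Since $\Gf{G}{0}$ is generated by the groups $\exp(\KK\delta)$ with $\delta \in \Gf{N}{0}$, invariance under each generator gives invariance under the whole group. I also note that $\VV(f)$ itself is $\Gf{G}{0}$-invariant, because $gf = f$.

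As an alternative that avoids $Y$ entirely, one can argue directly. Any $g \in \Aut(X)$ with $gf = f$ is an automorphism of $X$ preserving the subvariety $\VV(f)$, together with its scheme structure given by the ideal generated by $f$. Both the rank of $J_X$ and the rank of $J_{\VV(f)}$ are intrinsic: they equal the embedding codimension in the Zariski tangent space. Hence these ranks are preserved by $g$.

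The only subtle point is this intrinsic-rank claim, namely that $\rk J_{\VV(f)}(P)$ depends only on the ideal $I(X) + (f)$ and not on the chosen generators. It does, since it equals $s - \dim T_P$ of the scheme defined by $I(X) + (f)$. Still, this is exactly why routing the argument through $Y_{\reg}$ and Corollary \ref{f_reg}, as the paper suggests, is cleaner.
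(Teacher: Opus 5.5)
Your argument is correct and is essentially the paper's proof. The paper also lifts each generator $\exp(\delta)$, $\delta \in \Gf{N}{0}$, to $\exp(\delta^{'})$ on $Y$, observes that it sends the regular point $(P, d, 0)$ to the regular point $(\exp(\delta)(P), d, 0)$, and reads off membership in $\RegVf$ via Corollary \ref{f_reg}; you simply take $d = 0$. Your alternative argument through the Zariski tangent space of the scheme cut out by $I(X) + (f)$ is also valid, and it gives invariance under any automorphism of $X$ that preserves the ideal $(f)$.
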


\begin{proof}
Choose a point $P \in \RegVf$ and show that $\psi(P) \in \RegVf$ for any $\psi \in \Gf{G}{0}$ acting on the divisor $\VV(f)$. To this end, write $\psi$ as the product
\[
\psi = \prod_{i = 1}^r \exp(\delta^{(i)}), \;\; \delta^{(i)} \in \Gf{N}{0}.
\]
By Corollary \ref{f_reg}, we have $\RegVf = \pi(U_d \cap V_0 \cap Y_{\reg})$ for any fixed $d \in \KK$. Lift the automorphism to $U_d \cap V_0 \cap Y_{\reg}$ by extending each $\delta^{(i)}$ to $\delta^{'(i)}$:
\[
\hat{\psi} = \prod_{i = 1}^r \exp(\delta^{'(i)}).
\]
The automorphism $\hat{\psi}$ sends the regular point $(P, d, 0) \in Y_{\reg}$ to the regular point $(\psi(P), d, 0) \in Y_{\reg}$. Projecting back to $X$, we obtain $\psi(P) \in \RegVf$.
\end{proof}

\begin{corollary}\label{rf_inv}
$\Rf$ is invariant under the action of $\Gf{G}{0}$ on $\VV(f)$.
\end{corollary}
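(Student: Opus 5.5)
We need to prove: $\Rf = \Df \cap \RegVf$ is invariant under $\Gf{G}{0}$ acting on $\VV(f)$. The plan is to combine the two invariance statements already established and check that the actions involved are the same.

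First, $\Gf{G}{0}$ preserves $f$: every $\delta \in \Gf{N}{0}$ satisfies $\delta(f)=0$, so $\exp(s\delta)(f) = f$. Hence $\Gf{G}{0}$ maps $\VV(f)$ to itself, and the action on $\VV(f)$ in the statement is well defined. Because it is the restriction of the action on $X$, invariance of a subset of $\VV(f)$ under the restricted action is the same as invariance under the action on $X$.

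Second, I would invoke part (2) of the Proposition following the definition of $\Df$: $\Df$ is $\Gf{G}{0}$-invariant. The reason is that for $g \in \Gf{G}{0}$ we have $gf = f$, and $\Gf{N}{1}$ is stable under conjugation by $g$ by the earlier Proposition. So $g$ maps the generating set $\{f\}\cup\{\delta f \mid \delta\in\Gf{N}{1}\}$ of $\If$ into itself, namely $g(\delta f) = (g\delta g^{-1})(gf)$. If I want to be careful, the only point to check is that the pullback action on functions and the action on points are compatible, so that $g^{*}\If \subseteq \If$ gives $g(\Df) \subseteq \Df$. Applying the same to $g^{-1}$ gives equality.

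Third, the preceding Corollary gives that $\RegVf$ is $\Gf{G}{0}$-invariant. An intersection of two invariant subsets is invariant, so $\Rf$ is $\Gf{G}{0}$-invariant.

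The main obstacle is only a bookkeeping one: making sure both invariances refer to the same action, namely that $g(\Df)=\Df$ as sets of points. This follows from $g^{*}\If = \If$ as above, so there is no genuine difficulty.
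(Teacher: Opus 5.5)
Your proof is correct and is essentially the paper's argument. The paper gives no separate proof because the corollary follows by intersecting the $\Gf{G}{0}$-invariant set $\Df$ (part (2) of the Proposition after the definition of $\Df$) with the $\Gf{G}{0}$-invariant set $\RegVf$ from the preceding corollary; your extra checks (that $\Gf{G}{0}$ fixes $f$ and so acts on $\VV(f)$, and that $g^{*}\If = \If$ gives $g(\Df) = \Df$) just fill in details the paper leaves implicit.
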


\begin{lemma} \label{along_U}
Suppose that $\SAut(X)$ acts transitively on $X_{\reg}$. Then for every $u_0 \in \KK$ there exists an automorphism $\psi \in G_u$ sending the point $P^{'} = (P, 0, 0) \in Y_{\reg}$ to $\psi(P^{'}) = (P, u_0, 0) \in Y_{\reg}$, provided that both points exist.
\end{lemma}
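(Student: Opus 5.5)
The plan is to use a single extended derivation of type $\hat{\delta}_u$, chosen so that its flow restricted to the hyperplane $V_0=\{v=0\}$ is a pure translation in the $u$-direction over the fixed point $P$.

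First we locate $P$. Since $(P,0,0)\in Y_{\reg}$, Corollary \ref{f_reg} gives $P\in \RegVf\subseteq \VV(f)\cap X_{\reg}$. By the definition of $\RegVf$, the rank of $J_{\VV(f)}(P)$ exceeds the rank of $J_X(P)$ by one. So the gradient row of $f$ at $P$ is not a linear combination of the rows of $J_X(P)$. Equivalently, the differential $df_P$ does not vanish identically on $T_PX$.

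Next we find a good LND. By hypothesis $\SAut(X)$ acts transitively on $X_{\reg}$, so by Proposition \ref{saut_action} every point of $X_{\reg}$ is flexible. Hence $T_PX$ is spanned by the vectors $\delta_P$ with $\delta\in\LND(X)$. Since $df_P\neq 0$ on $T_PX$, some $\delta\in\LND(X)$ satisfies $\delta(f)(P)=df_P(\delta_P)\neq 0$. In particular $\delta(f)\neq 0$. Because the weight of $u$ equals $1$, Lemma \ref{lnd_criteria}(1) shows that
\[
\hat{\delta}_u=\hat{\delta}_u(\delta,1)=v^k\delta'+\delta'(f)\,\partial_u
\]
is locally nilpotent. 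This holds with no local-slice assumption on $\delta$, and the constant $q=1$ is admissible since $1\in(\Ker\,\delta')_u$. Thus $\exp(t\hat{\delta}_u)\in G_u$ for every $t\in\KK$.

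Then we compute the flow on $V_0$. We have $\hat{\delta}_u(v)=0$, so $v$ is invariant and $V_0\cap Y$ is preserved. On $V_0$ the coefficient $v^k$ of $\delta'$ vanishes. The main point to check is that the flow restricted to $V_0$ fixes the $x$-coordinates. To see this, note that $\hat{\delta}_u(x_j)=v^k\delta(x_j)\in(v)$, and the ideal $(v)$ is $\hat{\delta}_u$-stable. Hence every term $\hat{\delta}_u^n(x_j)$ with $n\geq 1$ lies in $(v)$, so $\exp(t\hat{\delta}_u)(x_j)\equiv x_j \pmod v$. For $u$ we have $\hat{\delta}_u(u)=\delta(f)$ and $\hat{\delta}_u^2(u)=v^k\delta^2(f)\in(v)$, so $\exp(t\hat{\delta}_u)(u)\equiv u+t\,\delta(f)\pmod v$. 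Geometrically, the orbit of $(P,0,0)$ is $\{(P,\,t\,\delta(f)(P),\,0)\}$.

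Finally we choose $t=u_0/\delta(f)(P)$, which maps $(P,0,0)$ to $(P,u_0,0)$ as required. I expect the only delicate step to be the first one: extracting $\delta(f)(P)\neq 0$ from the rank condition defining $\RegVf$, and using the flexibility of $P$ to obtain $\delta$. The remaining steps are direct computations modulo the ideal $(v)$.
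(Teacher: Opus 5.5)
Your proof is correct and matches the paper's argument: $P\in\RegVf$ (via Corollary \ref{f_reg}) gives $df_P\neq 0$ on $T_PX$, flexibility then yields $\delta\in\LND(X)$ with $\delta(f)(P)\neq 0$, and the orbit of $(P,0,0)$ under $\exp(\KK\hat{\delta}_u)$ with $\hat{\delta}_u=v^k\delta'+\delta'(f)\,\partial_u$ is $(P,\KK,0)$. Your appeal to Lemma \ref{lnd_criteria}(1) and your computation of the flow modulo $(v)$ spell out steps the paper leaves implicit.
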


\begin{proof}
For the point $P^{'} = (P, 0, 0) \in Y_{\reg}$, its projection $P$ belongs to $\RegVf$ (see Corollary \ref{f_reg}), and $df(P) \neq 0$ in the cotangent space $T_P^* X$. Since $X$ is flexible, there exists $\partial \in \LND(X)$ such that $\partial(f)(P) \neq 0$. Extending it, we obtain the derivation
\[
\hat{\partial} = v^k \partial + \partial(f) \partial_u, \;\;\;\; \hat{\partial}(u)(P^{'}) = \partial(f) \neq 0.
\]
The orbit of the corresponding $\Ga$-subgroup is $(P, \KK, 0)$.
\end{proof}

\subsection{Proof of $G$-orbit decomposition theorem}\label{proof_theorem_orbits}

Fix a point $Q^{'} \in V_c \cap Y_{\reg}$, where $c \in \KK \setminus {0}$, and choose a point $P^{'} \in Y_{\reg}$ such that the conditions $v(P^{'}) = 0$ and $\pi(P^{'}) \in \Df$ do not hold simultaneously. We show that there exists an automorphism in $G$ sending $P^{'}$ to $Q^{'}$. As usual, let $P$ denote the projection $\pi(P^{'})$.

1. If $P \notin \Df$, then there exists $\delta \in \Gf{N}{1}$ such that $\delta(f)(P) \neq 0$. Extend $\delta$ as
\[
\hat{\delta} = k u v^{k - 1} \delta + \delta (f) \, \partial_v, \;\;\;\; \hat{\delta} (v) (P^{'}) = \delta (f)(P) \neq 0.
\]
The orbit of the point $P^{'}$ under the $\Ga$-subgroup $\exp(\KK \hat{\delta})$ intersects $V_c$. By Lemma \ref{Vc}, the action of $G$ on $V_c \cap Y_{\reg}$ is transitive, so there exists an automorphism sending the intersection point to $Q^{'}$.

2. Suppose that $P \in \Df$. If $v(P^{'}) = d \neq 0$, then by Corollary \ref{out_U0} there exists an automorphism $\psi$ such that $\psi(P^{'}) \in (V_d \cap Y_{\reg}) \setminus U_0$. For such a point, $\pi(\psi(P^{'})) \notin \VV(f)$, and therefore $\pi(\psi(P^{'})) \notin \Df$. Thus, we are reduced to the first case.

It remains to consider the final case, which corresponds to the complement of the open orbit.

3. Suppose that $P \in \Df$ and $v(P^{'}) = 0$. By Corollary \ref{f_reg}, we have $P \in \Rf$. From now on, assume that $\Rf$ is nonempty; otherwise, the proof is complete. Restrict the extensions of all $\delta \in \LND(X)$ to $Z = \Rf \times \Affine^1_u \times \{0\}$:
\[
\forall \, \delta \notin \Gf{N}{0}: \hat{\delta}_u(\delta) |_Z = \delta (f) \, \partial_u,
\]
\[
\forall \, \delta \in \Gf{N}{1}: \hat{\delta}_v(\delta) |_Z \equiv 0,
\]
\[
\forall \, \delta \in \Gf{N}{0}: \hat{\delta}(\delta) |_Z \equiv \delta^{'}.
\]
Since $\Rf$ is invariant under the action of $\Gf{G}{0}$, the subset $Z$ is invariant under the action of $\Ga$-subgroups corresponding to the extended LNDs. Hence $Z$ is $G$-invariant. By Lemma \ref{along_U}, all points of the form $(P, \KK, 0)$ belong to the same $G$-orbit. It is now easy to see that the $G$-orbits on $Z$ are precisely $\Gf{G}{0}p \times \Affine^1_u \times \{0\}$, where $p \in \Rf$.

\end{document}